\definecolor{hypercolor}{HTML}{003399}
\newtheorem{thm}{Theorem}[section]
\newtheorem{lem}[thm]{Lemma}
\newtheorem{prop}[thm]{Proposition}
\newtheorem{cor}[thm]{Corollary}
\theoremstyle{definition}
\numberwithin{equation}{section}
\acrodef{LDP}{Large Deviation Principle}
\acrodef{KPZ}{Kardar--Parisi--Zhang}
\newcommand{\e}{\varepsilon}
\newcommand{\calA}{\mathcal{A}}
\newcommand{\calT}{\mathcal{T}}
\newcommand{\calS}{\mathcal{S}}
\newcommand{\R}{\mathbb{R}}
\newcommand{\Lsp}{L}						
\newcommand{\hk}{p}							
\newcommand{\lens}{\mathsf{Lens}}			
\newcommand{\Path}{\gamma} 					
\newcommand{\geodesic}{\theta}
\newcommand{\perturbation}{\eta}
\newcommand{\dev}{\rho}						
\newcommand{\devm}{\rho^{\scriptscriptstyle\mathrm{m}}}	
\newcommand{\devlim}{\rho_*}				
\newcommand{\Zfn}{\mathsf{Z}}				
\newcommand{\Zm}{\Zfn^{\scriptscriptstyle\mathrm{m}}}
\newcommand{\hm}{h}
\newcommand{\hlim}{h_*}
\newcommand{\FK}{\mathsf{FK}}				
\newcommand{\FKm}{\FK^{\scriptscriptstyle\mathrm{m}}}
\renewcommand{\P}{\mathbb{P}}				
\newcommand{\E}{\mathbb{E}}					
\renewcommand{\d}{\mathrm{d}}				
\newcommand{\ind}{\mathbf{1}}				
\newcommand{\norm}[1]{\Vert #1\Vert}
\newcommand{\ip}[1]{\langle #1\rangle}
\renewcommand{\bar}{\overline}
\newcommand*{\Cdot}{{\raisebox{-0.5ex}{\scalebox{1.8}{$\cdot$}}}} 
\title{A lower-tail limit in the weak noise theory}
\author{Yier Lin \and Li-Cheng Tsai}
\address[Yier Lin]{Departments of Statistics, University of Chicago}
\email{ylin10@uchicago.edu}
\address[Li-Cheng Tsai]{Departments of Mathematics, University of Utah}
\email{lctsai.math@gmail.com}
\begin{document}
\maketitle

\begin{abstract}
We consider the variational problem associated with the Freidlin--Wentzell Large Deviation Principle of the Stochastic Heat Equation (SHE).
The logarithm of the minimizer of the variational problem gives the most probable shape of the solution of the Kardar--Parisi--Zhang equation conditioned on achieving certain unlikely values.
Taking the SHE with the delta initial condition and conditioning the value of its solution at the origin at a later time, under suitable scaling, we prove that the logarithm of the minimizer converges to an explicit function as we tune the value of the conditioning to $ 0 $.
Our result confirms the physics prediction \cite{kolokolov09,meerson16,kamenev16}.
\end{abstract}

\section{Introduction}
In this paper we study the following variational problem.
For a given $ \dev = \dev(t,x) \in L^2([0,2]\times\R) $, consider the heat equation driven by the potential $ \dev $, with the delta initial condition:
\begin{align}
	\label{e.he}
	&&
	&\partial_t \Zfn = \tfrac12 \partial_{xx} \Zfn + \dev \Zfn,
	\
	(t,x) \in (0,2)\times\R,
	&&
	\Zfn(0,\Cdot) = \delta(\Cdot).&
\end{align}
We write $ \Zfn=\Zfn[\dev] = \Zfn[\dev](t,x) $ for the solution of this equation.
The variational problem of interest is
\begin{align}
	\label{e.variationalproblem}
	\inf\big\{ \tfrac{1}{2} \norm{ \dev }^2_2 \, : \, \Zfn[\dev](2,0) =  e^{-\lambda} \big\},
\end{align}
where $ \lambda >0 $ is a parameter, and $ \norm{\Cdot}_2 $ denotes the $ L^2 $ norm.
It was proven in \cite[Corollary~2.5(b)]{tsai22wnt} that, for all large enough $ \lambda $, the variational problem~\eqref{e.variationalproblem} has a unique minimizer $ \devm=\devm(\lambda;t,x) $.

This variational problem describes the Freidlin--Wentzell Large Deviation Principle (LDP) of the Stochastic Heat Equation (SHE) and the Kardar--Parisi--Zhang (KPZ) equation.   
Consider the SHE $ \partial_t Z_\e = \frac12 \partial_{xx} Z_\e + \sqrt{\e} \xi Z_\e $ with $ Z_\e(0,\Cdot)=\delta(\Cdot) $, where $ \xi $ denotes the spacetime white noise.
It was proven in \cite{lin21} that $ Z_\e $ enjoys the LDP with speed $ \e^{-1} $ and the rate function
$ 
	I_\mathrm{SHE}(f) := \inf\{ \frac{1}{2} \norm{ \dev }^2_2 : \Zfn[\dev] = f \}.
$
Given the LDP, we see that the variational problem \eqref{e.variationalproblem} corresponds to conditioning the value of $ Z_\e(2,0) $ around $ e^{-\lambda} $.
Accordingly, the function $ \Zfn[\devm](t,x) $ is the most probable path of $ Z_\e $ under the conditioning.
The solution of the SHE produces the solution of the KPZ equation through taking logarithm. 
Namely, $ \log Z_\e = h_\e $, and $ h_\e $ solves the KPZ equation.
Hence $ \hm := \log \Zfn[\devm] $ gives the most probable path of the solution of the KPZ equation, which we refer to as the \textbf{most probable shape}.

Studying the Freidlin--Wentzell LDPs for the SHE and the KPZ equation through the variational problem goes under the name of the \emph{weak noise theory}.
There has been much development around the weak noise theory in the physics and mathematics literature 
\cite{kolokolov07,kolokolov08,kolokolov09,janas16,kamenev16,meerson17,hartmann18,meerson18,smith18exact,smith18finitesize,asida19,hartmann19,smith19,hartmann20,hartmann21,gaudreaulamarre21,lin21}, and more recently around the connection of the weak noise theory to integrable PDEs \cite{krajenbrink20det,krajenbrink21,krajenbrink21flat,bettelheim22,krajenbrink22,mallick22,tsai22wnt}.
Among the many questions of interest in the weak noise theory are the behaviors of the most probable shape under certain limits.
Particular limits of interest are sending the conditioned value $ \log\Zfn[\dev](2,0) $ to $ +\infty $ or $ -\infty $.
We refer to them as the (one-point) upper- and lower-tail limits, respectively.
For the delta initial condition considered here, the upper- and lower-tail limits of the most probability shape (under suitable scaling) have been predicted in the physics works \cite{kolokolov09,meerson16,kamenev16}.
Later, the work \cite{gaudreaulamarre21} gave a rigorous proof of the upper-tail limit.
The behaviors of the upper- and lower-tail limits are very different, and a rigorous proof of the lower-tail limit remained open.

In this paper, we prove the lower-tail limit of the most probable shape.
Let us introduce the scaling.
Set $ \Zfn_\lambda[\dev](t,x) := \Zfn[\lambda\dev(\Cdot,\lambda^{-1/2}\Cdot)](t,x\lambda^{1/2}) $.
Under this scaling, the variational problem becomes
\begin{align}
	\label{e.variationalproblem.scaled}
	\eqref{e.variationalproblem}
	=
	\lambda^{5/2} \, \inf\big\{ \tfrac{1}{2} \norm{ \dev }^2_2 \, : \, \Zfn_\lambda[\dev](2,0) = e^{-\lambda} \big\},
\end{align}
and its minimizer is $ \devm_\lambda(t,x) := \lambda^{-1}\devm(\lambda; t,\lambda^{1/2}x) $.
Let $ \Zfn_\lambda[\devm_\lambda](t,x) := \Zm_\lambda(t,x) $ and $ \hm_\lambda (t,x) := \frac{1}{\lambda}\log \Zm_\lambda(t,x) $.
The main result is as follows.
\begin{thm}
\label{t.main}
Let $ \hlim $ be given by \eqref{e.devlim} and \eqref{e.hlim.geodesic.0} and depicted in Figure~\ref{f.hlim} in Section~\ref{s.notation.tools}.
For any $ \delta>0 $,
\begin{align*}
	\lim_{\lambda\to\infty} \, 
	\sup\big\{
		| \hm_{\lambda}(t,x) - \hlim(t,x) | 
		\, : \,
		(t,x) \in (\delta,2]\times[-\delta^{-1},\delta^{-1}]
	\big\}
	=0.
\end{align*}
\end{thm}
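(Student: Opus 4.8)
The plan is to analyze the Euler--Lagrange system for the scaled variational problem~\eqref{e.variationalproblem.scaled} and extract its limit as $\lambda\to\infty$. Recall from \cite{tsai22wnt} that the minimizer $\devm$ comes with a conjugate/adjoint field; write $\Zm_\lambda = \Zfn_\lambda[\devm_\lambda]$ and let $\perturbation_\lambda$ be the associated backward solution, so that the optimality conditions read $\devm_\lambda \propto \perturbation_\lambda \Zm_\lambda$ up to a normalization fixed by the constraint $\Zm_\lambda(2,0)=e^{-\lambda}$. The first step is to pass to the Hopf--Cole variables: set $\hm_\lambda = \frac1\lambda\log\Zm_\lambda$ and an analogous potential $q_\lambda$ from $\perturbation_\lambda$, and derive the coupled Hamilton--Jacobi system they satisfy, of the form $\partial_t \hm_\lambda = \frac12(\partial_x \hm_\lambda)^2 + \frac1{2\lambda}\partial_{xx}\hm_\lambda + (\text{coupling})$, $-\partial_t q_\lambda = \dots$. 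In the $\lambda\to\infty$ limit the viscosity terms $\frac1{2\lambda}\partial_{xx}$ formally drop out, leaving an inviscid system whose solution should be $\hlim$ together with its conjugate; the candidate $\hlim$ in \eqref{e.devlim}, \eqref{e.hlim.geodesic.0} is built from an explicit geodesic/characteristic picture.

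Second, I would establish compactness and a priori bounds uniform in $\lambda$. The energy bound comes directly from \eqref{e.variationalproblem.scaled}: the scaled cost $\frac12\norm{\devm_\lambda}_2^2$ is bounded (it equals $\lambda^{-5/2}$ times~\eqref{e.variationalproblem}, and the order of the latter is known to be $\lambda^{5/2}$ to leading order from the matching physics/rigorous one-point estimates, e.g.\ via \cite{lin21}). Feeding $\norm{\devm_\lambda}_2 = O(1)$ into parabolic regularity for~\eqref{e.he} gives local H\"older bounds on $\Zm_\lambda$ on $(\delta,2]\times[-\delta^{-1},\delta^{-1}]$ away from $t=0$, and hence on $\hm_\lambda$ once one controls $\Zm_\lambda$ away from $0$ (a lower bound of the right exponential order, again from the structure of the minimizer in \cite{tsai22wnt}). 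This yields precompactness of $\{\hm_\lambda\}$ in $C_{\mathrm{loc}}((0,2]\times\R)$, so along subsequences $\hm_\lambda \to \hm_\infty$ and $q_\lambda \to q_\infty$.

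Third, identify the limit. Any subsequential limit $(\hm_\infty, q_\infty)$ is a (viscosity) solution of the inviscid coupled Hamilton--Jacobi system with the boundary data inherited from the delta initial condition (so $\hm_\infty$ has the parabola $-x^2/2t$ singularity as $t\downarrow 0$) and the terminal normalization $\hm_\infty(2,0)=-1$ coming from $e^{-\lambda}$ after dividing by $\lambda$. The point is that this limiting variational/PDE problem has a \emph{unique} minimizer, and it is exactly $\hlim$: one checks that the explicit $\hlim$ of \eqref{e.devlim}--\eqref{e.hlim.geodesic.0}, together with its conjugate field, solves the system and satisfies the boundary conditions, and then invokes a uniqueness statement for the inviscid problem (convexity of the limiting cost, or the strict concavity/monotonicity structure of the Hamiltonian, forces uniqueness of the optimal characteristic configuration). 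Since every subsequential limit equals $\hlim$, the full sequence converges, and the convergence is uniform on the stated compact sets because the precompactness was in the local uniform topology.

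The main obstacle I expect is the \emph{uniform lower bound on $\Zm_\lambda$} (equivalently, ruling out that $\hm_\lambda$ drifts to $-\infty$ on compacts) and the accompanying control that the minimizer's mass does not escape to spatial infinity or concentrate near $t=0$ as $\lambda\to\infty$; this is where the lower tail genuinely differs from the upper tail treated in \cite{gaudreaulamarre21}, since in the lower-tail regime the optimal potential $\devm_\lambda$ is spread out rather than localized, and one must show the limiting geodesic picture (with its characteristic shocks) is the correct one rather than some degenerate configuration. Quantitatively this amounts to matching the $O(1)$ value of the scaled cost with the known leading-order lower-tail rate $\tfrac{4\sqrt2}{15\pi}\lambda^{5/2}$ and showing no loss in the limit; I would handle it by a $\Gamma$-convergence argument for the functionals $\dev \mapsto \frac12\norm\dev_2^2$ constrained by $\Zfn_\lambda[\dev](2,0)=e^{-\lambda}$, proving the $\liminf$ inequality from lower semicontinuity and the $\limsup$ inequality by an explicit recovery sequence built from $\hlim$, which simultaneously pins down the limit and gives the convergence of minimizers.
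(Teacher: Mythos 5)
Your proposal has two genuine gaps, and they sit exactly where the paper identifies the difficulty. First, the compactness step does not go through as stated: after the scaling, the equation for $\Zm_\lambda$ has viscosity $\tfrac{1}{2\lambda}$ and potential $\lambda\devm_\lambda$ (cf.\ \eqref{e.HJ-FK}), so the bound $\norm{\devm_\lambda}_2=O(1)$ fed into parabolic regularity gives nothing uniform in $\lambda$; there is no off-the-shelf local H\"older bound for $\hm_\lambda$ that survives the vanishing-viscosity limit. The paper has to manufacture this by hand (Proposition~\ref{p.holder}), via a sequence of Feynman--Kac ``surgeries'' that exploit $\devm_\lambda\le 0$ and the lower bound of Proposition~\ref{p.lwbd}, and even then only obtains spatial H\"older regularity with a small exponent plus a one-sided time estimate (Lemma~\ref{l.holder.t}) --- not $C_{\mathrm{loc}}$ precompactness of the pair $(\hm_\lambda,q_\lambda)$.

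Second, and more fundamentally, the identification step (whether phrased as uniqueness of viscosity solutions for the inviscid forward--backward system or as $\Gamma$-convergence with a recovery sequence) relies on information that provably does not determine the limit. All that is available about the coupling term is $\devm_\lambda\to\devlim$ in $L^2$ together with $\devm_\lambda\le 0$, and the paper exhibits a counterexample showing this is insufficient: with $w_\lambda(t,x):=\devlim(t,x)\ind_{\{|x|>\lambda^{-b}\}}$, one has $w_\lambda\le 0$ and $w_\lambda\to\devlim$ in $L^2$, yet $\tfrac1\lambda\log\Zfn_\lambda[w_\lambda]$ stays a positive distance from $\hlim$ near $(2,0)$. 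So convergence of the minimizers (which $\Gamma$-convergence would give, and which is anyway already known from \cite{lin21}) does not yield $\hm_\lambda\to\hlim$, and a subsequential limit of $\hm_\lambda$ cannot be identified by merely passing to the limit in the coupled system; no uniqueness theorem for this inviscid HJ--transport system with a one-point terminal constraint and delta initial data is available to invoke. The paper's actual route bypasses controlling $\devm_\lambda$ pointwise: the $L^2$ convergence alone does suffice for the \emph{lower} bound along geodesics (Proposition~\ref{p.lwbd}, via Girsanov plus Jensen with an averaged perturbation of the geodesic), and the \emph{upper} bound is then extracted from the exact pinning $\hm_\lambda(2,0)=-1$ combined with the lower bound and the H\"older estimate, through the semigroup identity \eqref{e.FKformula.stot} inside the lens and a hitting-time/LDP decomposition outside it. Your plan is missing a replacement for this mechanism; as written, both the compactness and the identification would fail at the point the counterexample targets.
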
 

Let us provide a context of Theorem~\ref{t.main} in terms of Hamilton--Jacobi-Fokker--Planck (HJ-FP) equations.
To begin, one can derive the Euler--Lagrangian equation for the variational problem~\eqref{e.variationalproblem.scaled} and turn the equation into a system of Hamilton equations.
This derivation is done in the physics literature of the weak noise theory (see \cite[Appendix]{kamenev16}, \cite[Supp.\ Mat.\ A]{meerson16}, \cite[Supp.\ Mat.\ A]{krajenbrink21}) and has recently been proven in \cite[Theorem 2.1]{tsai22wnt} (at the level of $ \Zm_\lambda $).
At the level of $ \hm_\lambda $, the Hamilton equations are
\begin{subequations}
\label{e.HJ-FK}
\begin{align}
	\label{e.HJ-FK.1}
	\partial_t \hm_\lambda &= \tfrac{1}{2\lambda} \partial_{xx} \hm_\lambda + \tfrac12 (\partial_x \hm_\lambda)^2 + \devm_\lambda,
\\
	-\partial_t \devm_\lambda &= \tfrac{1}{2\lambda} \partial_{xx} \devm_\lambda - \partial_x( \devm_\lambda \partial_x \hm_\lambda ),
\end{align}
\end{subequations}
where the second equation is solved backward in time with a suitable terminal condition at $ t=2 $.
This system can be viewed as an instance of the HJ-FP equations studied in mean field games \cite{lions07,cardaliaguet10,gueant11}.
The lower-tail limit $ \lambda\to\infty $ here corresponds to the inviscid limit in the language of mean field games and PDE.
We expect that, for $ \devm_\lambda \leq 0 $, the solution of \eqref{e.HJ-FK} converges to the entropy solution of
\begin{subequations}
\label{e.HJ-FK.limit}
\begin{align}
	\partial_t \hlim &= \tfrac12 (\partial_x \hlim)^2 + \devlim,
\\
	-\partial_t \devlim &= - \partial_x( \devlim \partial_x \hlim ).
\end{align}
\end{subequations}
The physic works \cite{kolokolov09,meerson16,kamenev16} solved \eqref{e.HJ-FK.limit} for the entropy solution and found an explicit expression for $ \devlim $.
Accordingly, the function $ \hlim $ can be expressed in terms of $ \devlim $; see Section~\ref{s.notation.tools.devlimhlim}.

From the PDE perspective, the challenge of proving Theorem~\ref{t.main} lies in controlling $ \devm_\lambda $.
We will show in Appendix~\ref{s.a.devm<=0} that $ \devm_\lambda \leq 0 $ for all $ \lambda $ large enough, and we will explain in Section~\ref{s.notation.tools.devlimhlim} that the results from \cite{lin21} gives $ \devm_\lambda \to \devlim $ in $ L^2 $.
These properties alone, however, do \emph{not} suffice for $ \hm_\lambda \to \hlim $.
For $ b>0 $, set $ w_\lambda(t,x) := \devlim(t,x) \ind_{\{|x|>\lambda^{-b}\}} $, which is non-positive and converges to $ \devlim $ in $ L^2 $.
It is possible to show that, there exist a small enough $ b $ and a neighborhood $ \Omega $ around $ (2,0) $ such that $ \lim_{\lambda\to\infty}\inf_{\Omega}|\frac{1}{\lambda}\log\Zfn_\lambda[w_\lambda] - \hlim| > 0 $.
In many settings of mean field games, the term $ \devm_\lambda $ in \eqref{e.HJ-FK.1} is replaced by a better-behaved term, and phenomena like the one just shown does not occur. 

Our proof proceeds through the Feynman--Kac formula and bypasses the need to control $ \devm_\lambda $.
The first key observation is that the property $ \devm_\lambda\to\devlim $ in $ L^2 $ alone \emph{does} suffice for proving the lower bound $ (\liminf_{\lambda\to\infty} \hm_\lambda) \geq \hlim $ and in fact a stronger version of it: Proposition~\ref{p.lwbd}.
Roughly speaking, Proposition~\ref{p.lwbd} states that the change of $ \hm_\lambda $ along a geodesic (defined in Section~\ref{s.notation.tools.devlimhlim}) is bounded from below by the change of $ \hlim $ along the same geodesic.
The second key observation is that the upper bound, which is the more subtle bound, follows by combining Proposition~\ref{p.lwbd} and the property $ \Zm_\lambda(2,0) = e^{\lambda\hm_\lambda(2,0)}=e^{-\lambda} $, as well as a H\"{o}lder continuity estimate.
This observation is manifested in the proof in Section~\ref{p.upbd.lens}.

Let us mention that $ \Zm_\lambda $ permits an explicit formula.
The expression was derived by \cite{krajenbrink21} based on \cite{krajenbrink20det} at a physics level of rigor, and later proven in \cite[Theorem~2.3, Corollary~2.5]{tsai22wnt}.
Extracting the limit of $ \hm_\lambda := \frac{1}{\lambda} \log\Zm_\lambda $ from this explicit formula is an interesting open problem and will provide another proof of Theorem~\ref{t.main}.

We conclude this introduction by discussing some related works.
There exists another approach to study the Freidlin--Wentzell LDPs for the SHE and the KPZ equation, based on explicit formulas of the one-point distribution.
The analysis has been carried out in the physics literature for various initial and boundary conditions \cite{ledoussal16short,krajenbrink17short,krajenbrink18half}.
While the explicit formulas does not give direct access to the most probable shape, they allow for studying LDPs for the KPZ equation in the long time regime \cite{ledoussal16long,sasorov17,corwin18,krajenbrink18simple,krajenbrink18systematic,krajenbrink19thesis,krajenbrink19,ledoussal19kp,corwin20lower,corwin20general,das21,kim21,lin21half,cafasso22,ghosal20,tsai22}. See also \cite{ganguly22,liu22} for related work.

\subsection*{Outline}
In Section~\ref{s.notation.tools} we prepare some notation and recall some basic tools.
In Section~\ref{s.lwbd}, we prove a lower bound, which in particular gives the lower half of Theorem~\ref{t.main}.
In Section~\ref{s.holder}, we prove a spatial H\"{o}lder continuity.
In Section~\ref{s.upbd}, we prove the upper half of Theorem~\ref{t.main}.

\subsection*{Acknowledgment}
We thank Daniel Lacker and Panagiotis Souganidis for useful discussions.
The research of Tsai was partially supported by the NSF through DMS-2243112 and the Sloan Foundation Fellowship.

\section{Notation and tools}
\label{s.notation.tools}

\subsection{The Feynman--Kac formula}
Let us introduce the Feynman--Kac formula for $ \Zfn_\lambda[\dev] $.
For $ s\leq t \in[0,2] $, let $ \mathrm{BB}_\lambda((t,x)\to(s,y)) $ denote the law of the following Brownian bridge, which goes \emph{backward} in time:
\begin{align}
	\label{e.bb}
	\lambda^{-1/2} \big( B(t-u) - \tfrac{u-s}{t-s}B(t-s) \big) + \tfrac{(u-s)x+(t-u)y}{t-s}, \qquad u \in [s,t],
\end{align}
where $ B $ denotes a standard Brownian motion.
For any $ \dev \in L^2([0,2]\times\R) $, define the Feynman--Kac expectation
\begin{align}
	\label{e.FKexp}
	&\FK_\lambda[\dev](s,y;t,x)
	:=
	\E\Big[ \exp\Big( \int_s^t \lambda \dev(u,W_\lambda(u))\, \d u \Big) \Big],&
	&
	W_\lambda \sim \mathrm{BB}_\lambda((t,x)\to(s,y)).&
\end{align}
Let $ \hk_\lambda(t,x) := \exp(-\lambda\frac{x^2}{2t})\sqrt{\lambda/(2\pi t)} $ denote the scaled heat kernel.
We have the Feynman--Kac formula
\begin{align}
	\label{e.FKformula}
	\Zfn_\lambda[\dev](t,x) &= \hk_\lambda(t,x) \, \FK_\lambda[\dev](0,0;t,x),
\\
	\label{e.FKformula.stot}
	\Zfn_\lambda[\dev](t,x) &= \int_{\R} \hk_\lambda(t-s,x-y) \, \FK_\lambda[\dev](s,y;t,x) \, \Zfn_\lambda[\dev](s,y) \, \d y,
	\qquad
	s<t\in(0,2].
\end{align}
For the sake of completeness, we provide a proof of \eqref{e.FKformula} in Appendix~\ref{s.a.FKformula}.
The formula~\eqref{e.FKformula.stot} follows from \eqref{e.FKformula} via conditioning $ W_\lambda $ at time $ s $.

\subsection{The limits $ \devlim $ and $ \hlim $}
\label{s.notation.tools.devlimhlim}
We begin by introducing $ \devlim $, which is the limit of $ \devm_\lambda $ as $ \lambda\to\infty $.
Let $ r $ be the unique $ C^1[1,2) $-valued solution with $ r|_{(1,2)} > \frac{\pi}{2} $ of the differential equation
\begin{align}
	\label{e.r.ode}
	\dot{r} = 2^{1/2} \pi^{-1/2} r^2 (r-\tfrac{\pi}{2})^{1/2},\qquad r(1) = \tfrac{\pi}{2},
\end{align}
and symmetrically extend $ r $ to $ C^1(0,2) $ by setting $ r(t) : = r(2-t) $ for $ t\in(0,1) $.
Set $ \ell(t):= 1/r(t) $.
Integrating the differential equation by separation of variables and analyzing the result near  $ t=2 $ give
\begin{align}
	\label{e.r.bound}
	r(t) = r(2-t) &= (\tfrac{2}{3})^{2/3}(\tfrac{\pi}{2})^{1/3} t^{-2/3} + o(t^{-2/3})  \text{ when } t \to 0.
\end{align}
In particular, $ r\to+\infty $ as $ t\to 0 $ or $ 2 $.
We hence set $ \ell(0) =\ell(2) := 0 $ to make $ \ell \in C[0,2] $.
The limit $ \devlim $ is given by
\begin{align}
	\label{e.devlim}
	\devlim(t,x) := - \tfrac{1}{2\pi} r(t) \big( 1 - \tfrac{x^2}{\ell(t)^2} \big)_+.
\end{align}
This expression was derived in the physics works \cite{kolokolov09,meerson16,kamenev16} by solving \eqref{e.HJ-FK.limit} for the entropy solution.
In the mathematics literature, \cite{lin21} gives the $ L^2 $ convergence
\begin{align}
	\label{e.devm.L2convergence}
	\lim_{\lambda\to\infty} \norm{ \devm_\lambda - \devlim }_2
	=
	0.
\end{align}
More precisely, this follows by combining $ \norm{\devm_\lambda-\devlim}^2_2 = \norm{\devm_\lambda}^2_2 - \norm{\devlim}^2_2 + 2\ip{\devlim,\devlim-\devm_\lambda} $ and Equations~(4.28) and (4.32) in \cite{lin21}.
Note that the expression within the limsup in \cite[Equation~(4.28)]{lin21} is $ \frac{1}{2}\norm{\devm_\lambda}^2_2 $.

We now turn to $ \hlim $. The function is defined by
\begin{align}
	\label{e.hlim}
	\hlim(t,x) := - \inf\Big\{ \int_0^t \frac12\dot{\Path}^2(s) - \devlim(s,\Path(s)) \, \d s \, : \, \Path \in H^1((0,0)\to(t,x)) \Big\},
\end{align}
with the convention $ \inf\emptyset := \infty $.
Hereafter $ H^1((s,y)\to(t,x)) $ denotes the space of paths $ \Path: [s,t] \to \R $ such that $ (\Path(s),\Path(t))=(y,x) $ and $ \int_s^t \dot{\Path}^2 \, \d u < \infty $.
To understand the motivation of this definition, recall that $ \hm_\lambda := \frac{1}{\lambda} \log \Zfn_\lambda[\devm_\lambda] $.
In view of \eqref{e.devm.L2convergence}, it is natural to expect (but non-trivial to prove) that this function approximates $ \frac{1}{\lambda} \log \Zfn[\devlim] $ as $ \lambda\to\infty $.
Since $ \devlim $ is uniformly continuous except near $ (0,0) $ and $ (2,0) $, through the Feynman--Kac formula~\eqref{e.FKformula}, the last expression can be analyzed by Varadhan's lemma, which yields \eqref{e.hlim} as $ \lambda\to\infty $.

Crucial to our analysis is the notion of geodesics.
Proposition~4.3 in \cite{lin21} shows that the infimum is achieved in $ H^1((0,0)\to(t,x)) $ for all $ (t,x)\in(0,2]\times\R $.
We call a path realizing the infimum a \textbf{geodesic}.
Proposition~4.3 in \cite{lin21} characterizes all geodesics, and they are depicted in Figure~\ref{f.geodesics}; see the caption there.
We write $ (s,y) \xrightarrow{\scriptscriptstyle\mathrm{geod.}} (t,x) $ for ``$ (s,y) $ and $ (t,x) $ are connected by a geodesic with $ s\leq t \in[0,2] $'', and write $ (s,y) \xrightarrow{\scriptscriptstyle\mathrm{geod.}\geodesic} (t,x) $ to signify that the geodesic is $ \geodesic $.
We can rewrite \eqref{e.hlim} more explicitly as
\begin{align}
	\label{e.hlim.geodesic.0}
	&&&\hlim(t,x) 
	= 
	\int_0^t \Big( -\frac12\dot{\geodesic}^2(u) + \devlim(u,\geodesic(u)) \Big) \, \d u,
	&
	&
	(0,0) \xrightarrow{\scriptscriptstyle\mathrm{geod.}\geodesic} (t,x).&
\end{align}
Note that, for any $ (t,x) \neq (2,0) $ with $ t>0 $, there exists a unique geodesic that connects $ (0,0) $ and $ (t,x) $; when $ (t,x) = (2,0) $, the expression~\eqref{e.hlim.geodesic.0} holds for $ \geodesic = a \ell $, for all $ a\in[-1,1] $; when $ t=0 $, by definition $ \hlim(0,x) = 0\ind_{\{x=0\}} - \infty \ind_{\{x\neq 0\}} $; a plot of $ \hlim $ is shown in Figure~\ref{f.hlim}.
Further, for any given $ (s,y) \xrightarrow{\scriptscriptstyle\mathrm{geod.}\geodesic} (t,x) $, the geodesic $ \geodesic $ can always be extended backward in time to $ (0,0) $.
This together with \eqref{e.hlim.geodesic.0} gives
\begin{align}
	\label{e.hlim.geodesic}
	&&&\hlim(t,x) - \hlim(s,y)
	=
	\int_s^t \Big( -\frac12\dot{\geodesic}^2(u) + \devlim(u,\geodesic(u)) \Big) \, \d u,
	&&
	(s,y) \xrightarrow{\scriptscriptstyle\mathrm{geod.}\geodesic} (t,x).&
\end{align}

\begin{figure}
\begin{minipage}[b]{.48\linewidth}
\includegraphics[width=\linewidth]{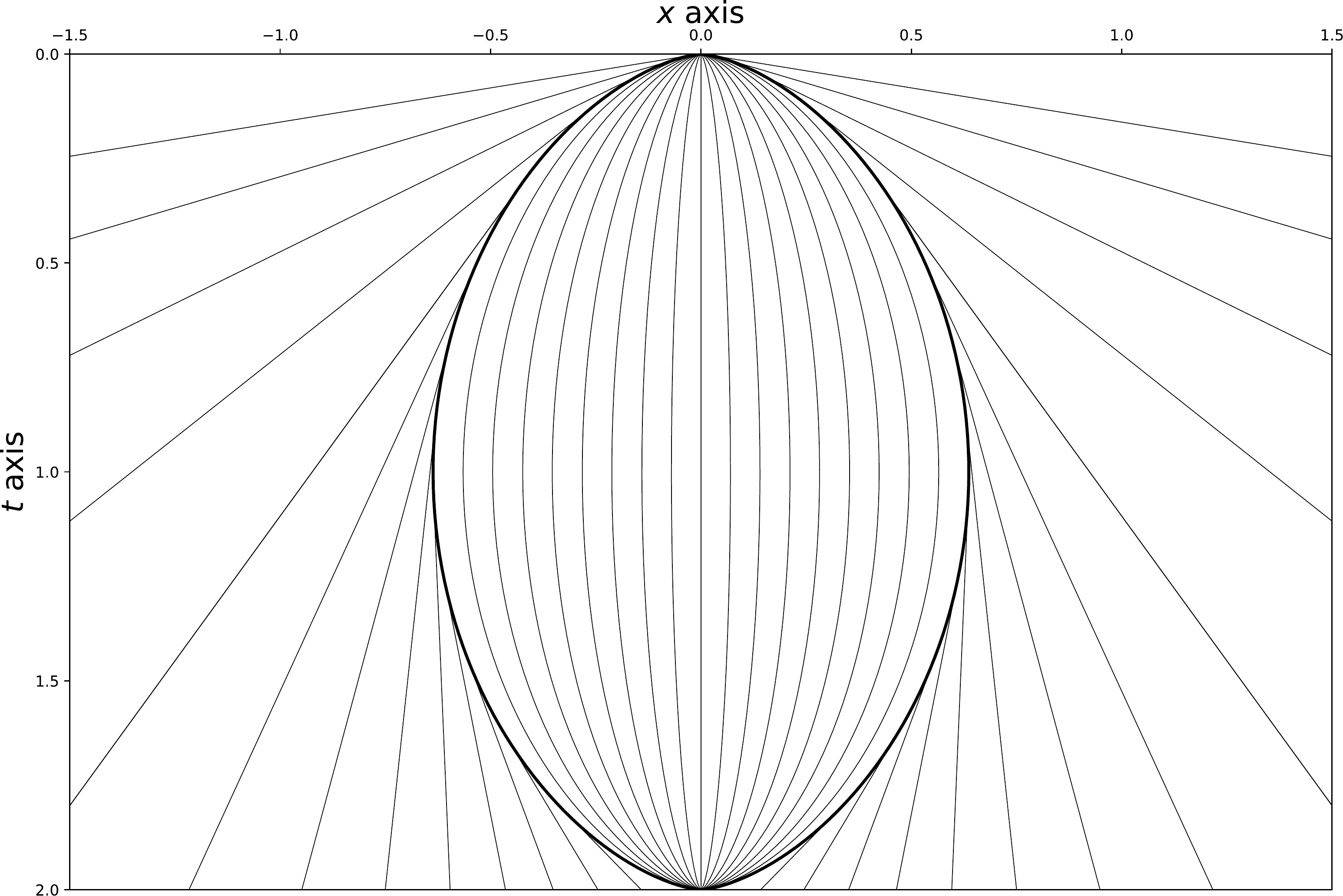}
\caption{The geodesics. Note that the $ t $ axis goes top-down. The thick lines are $ \pm \ell $. We refer to the region bounded by $ \pm\ell $ the lens region. The geodesics within the lens region are $ a\ell $, for $ a\in[-1,1] $. The geodesics outside of the lens region are straight lines that touch $ \pm\ell $ at tangent.}
\label{f.geodesics}
\end{minipage}
\hfill
\begin{minipage}[b]{.48\linewidth}
\includegraphics[width=\linewidth]{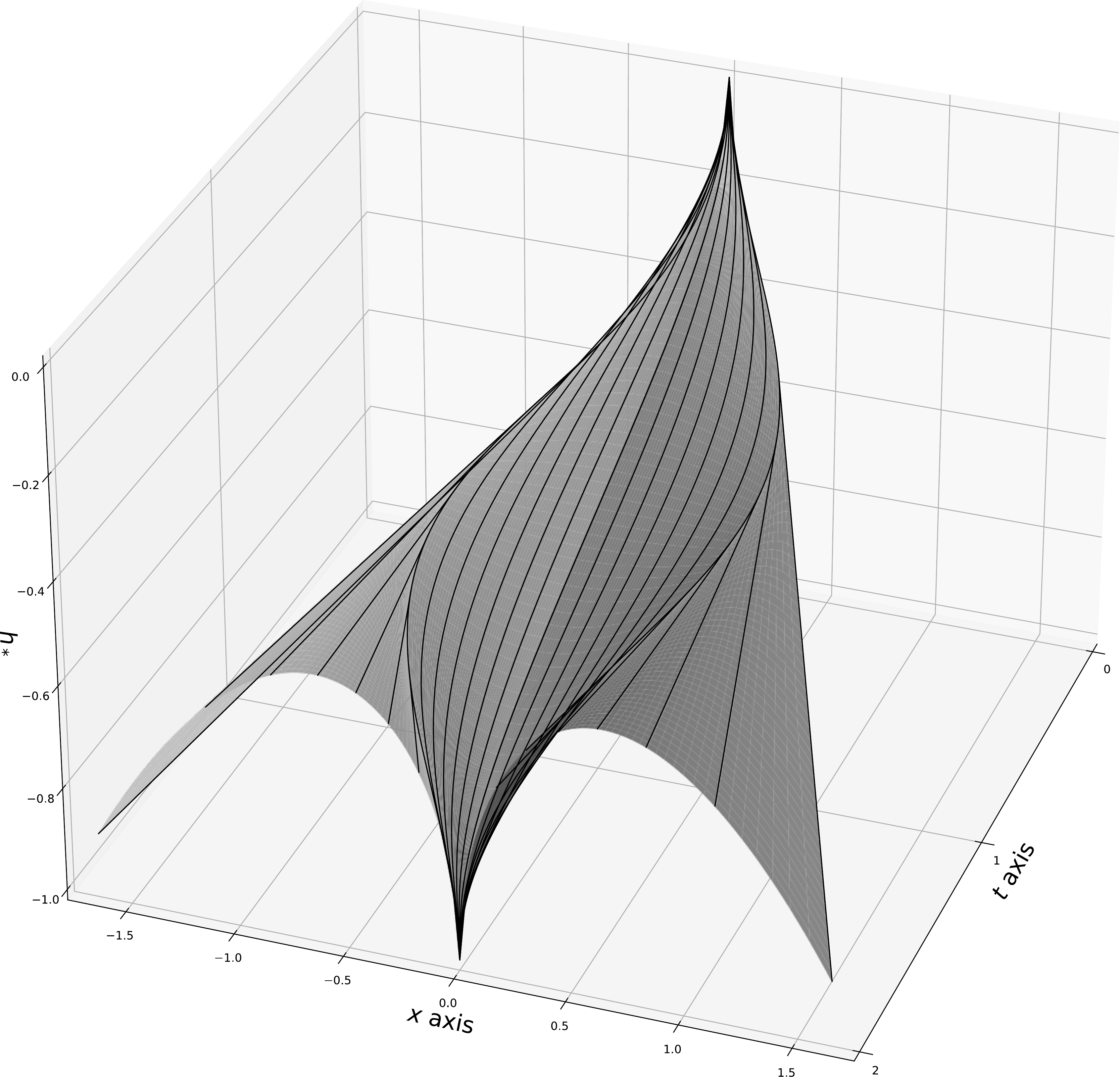}
\caption{The plot of $ \hlim $. The curves on $ \hlim $ are traced out by the geodesics. For better visualization, we did not show some part of $ \hlim $ outside of the lens region.}
\label{f.hlim}
\end{minipage}
\end{figure}

\section{Lower bound}
\label{s.lwbd}

In this section we prove the following proposition.
\begin{prop}
\label{p.lwbd}
For any $ \{ \dev^{(\lambda)} \}_{\lambda} \subset L^2([0,2]\times\R) $ with $ \dev^{(\lambda)}\to\devlim $ in $ L^2 $,
\begin{align*}
	\liminf_{\lambda\to\infty}\,
	\inf\Big\{  
		\frac{1}{\lambda} \log \FK_\lambda[\dev^{(\lambda)}](s,y;t,x) - \Big( \hlim (t,x) - \hlim(s,y) + \frac{(y-x)^2}{2(t-s)} \Big)  
	\Big\}
	\geq 0,
\end{align*}
where the infimum goes over all $ (s,y) \xrightarrow{\scriptscriptstyle\mathrm{geod.}} (t,x) $.
\end{prop}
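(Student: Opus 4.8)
The plan is to obtain the lower bound from a Feynman--Kac / Laplace-type argument along the geodesic $\geodesic$ connecting $(s,y)$ to $(t,x)$, localizing the Brownian bridge to a narrow tube around $\geodesic$ and controlling the $L^2$ error $\norm{\dev^{(\lambda)}-\devlim}_2$ uniformly in the endpoints. Concretely, fix $(s,y)\xrightarrow{\scriptscriptstyle\mathrm{geod.}\geodesic}(t,x)$ and recall from \eqref{e.FKexp} that $\FK_\lambda[\dev^{(\lambda)}](s,y;t,x)=\E[\exp(\int_s^t\lambda\,\dev^{(\lambda)}(u,W_\lambda(u))\,\d u)]$ with $W_\lambda\sim\mathrm{BB}_\lambda((t,x)\to(s,y))$. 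First I would discard the fluctuating part: write $W_\lambda(u)=\geodesic_{s,y;t,x}(u)+\lambda^{-1/2}\tilde B(u)$ where $\geodesic_{s,y;t,x}$ is the straight-line interpolation and $\tilde B$ a Brownian bridge, and restrict attention to the event $A_\delta=\{\sup_{u\in[s,t]}\lambda^{-1/2}|\tilde B(u)|\le\delta\}$; since $\FK_\lambda\ge\E[\exp(\cdots)\ind_{A_\delta}]$, the straight line interpolation contributes its deterministic action while $\P(A_\delta)$ costs only a subexponential factor, uniformly in the endpoints because the bridge normalization is endpoint-independent. The upshot is a lower bound of the form $\frac1\lambda\log\FK_\lambda \ge \frac1\lambda\log\P(A_\delta) + \inf_{\Path}\big(\frac1{t-s}\int_s^t\dev^{(\lambda)}(u,\Path(u))\,\d u\big)$-type term, but one must be careful that the straight line is generally \emph{not} a minimizer of $\int\frac12\dot\Path^2-\devlim$.

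The cleaner route, which I expect to be the one used, is to replace the straight-line comparison path by the geodesic $\geodesic$ itself and invoke \eqref{e.hlim.geodesic}. Since $\geodesic\in H^1((s,y)\to(t,x))$, one can lower bound $\FK_\lambda$ by restricting the Brownian bridge to a tube of width $\lambda^{-\kappa}$ (for suitable $\kappa\in(0,1/2)$) around $\geodesic$: by Girsanov, tilting the bridge to have mean curve $\geodesic$ costs $\exp(-\lambda\int_s^t\frac12\dot\geodesic^2)$ up to a deterministic bridge-endpoint factor, matching the $-\frac12\dot\geodesic^2$ term in \eqref{e.hlim.geodesic} plus the Gaussian factor $\exp(-\lambda\frac{(y-x)^2}{2(t-s)})$ that accounts for the free-bridge density — this is exactly the $\frac{(y-x)^2}{2(t-s)}$ appearing in the statement. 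On the tube, $\int_s^t\lambda\,\dev^{(\lambda)}(u,W_\lambda(u))\,\d u \ge \int_s^t\lambda\,\devlim(u,W_\lambda(u))\,\d u - \lambda\int_s^t|\dev^{(\lambda)}-\devlim|(u,W_\lambda(u))\,\d u$, and by continuity of $\devlim$ away from $(0,0)$ and $(2,0)$ the $\devlim$ integral converges to $\int_s^t\devlim(u,\geodesic(u))\,\d u$; together with \eqref{e.hlim.geodesic} this reproduces $\hlim(t,x)-\hlim(s,y)$.

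The main obstacle is handling the error term $\lambda\int_s^t|\dev^{(\lambda)}-\devlim|(u,W_\lambda(u))\,\d u$, which is where the $L^2$ hypothesis must be converted into an $L^1$-along-a-curve estimate — a nontrivial step because an $L^2$-small function can be large on a thin set that the bridge might hit. I would control this by a second-moment / occupation-density argument: the expected occupation measure of the localized bridge $W_\lambda$ on $[s,t]\times\R$ has a density bounded (after the $\lambda^{-1/2}$ scaling) by something like $C\lambda^{1/2}$ times a Gaussian in the tube, so $\E[\lambda\int_s^t|\dev^{(\lambda)}-\devlim|(u,W_\lambda(u))\,\d u] \le C\lambda^{3/2}\,\norm{\dev^{(\lambda)}-\devlim}_{L^1(\text{tube})} \le C\lambda^{3/2}|\text{tube}|^{1/2}\norm{\dev^{(\lambda)}-\devlim}_2$, and choosing the tube width and a truncation level of $\dev^{(\lambda)}-\devlim$ appropriately makes this $o(\lambda)$ after taking logarithms — but this requires uniformity over all geodesic-connected pairs $(s,y)\xrightarrow{\scriptscriptstyle\mathrm{geod.}}(t,x)$, which one gets from the fact (recorded in Section~\ref{s.notation.tools.devlimhlim}) that every geodesic extends back to $(0,0)$ and from compactness of the relevant portion of the lens picture in Figure~\ref{f.geodesics}. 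A subtlety near the endpoints $t\to 0,2$ and near $x$ large is that $\devlim$ and $r$ blow up or the geodesic exits any fixed compact set; there one either uses the explicit bound \eqref{e.r.bound} on $r$ and the fact that $\devlim\le 0$ (so the $\devlim$ term only helps in the lower bound) to absorb the contribution, or restricts first to geodesics staying in a compact region and handles the residual by monotonicity in the potential.
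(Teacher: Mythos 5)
Your skeleton (Girsanov tilt of the bridge along the geodesic, which produces the kinetic term and the $\tfrac{(x-y)^2}{2(t-s)}$ correction, and then reduce everything to controlling $\dev^{(\lambda)}-\devlim$ along the bridge) agrees with the first half of the paper's argument, but the step you yourself flag as the crux --- converting $L^2$ convergence into control along the path --- does not close as you propose. Your estimate $\E\big[\lambda\int_s^t|\dev^{(\lambda)}-\devlim|(u,W_\lambda(u))\,\d u\big]\le C\lambda^{3/2}|\mathrm{tube}|^{1/2}\,\norm{\dev^{(\lambda)}-\devlim}_2$ requires, after dividing by $\lambda$, that $\lambda^{(1-\kappa)/2}\norm{\dev^{(\lambda)}-\devlim}_2\to 0$ for a tube of width $\lambda^{-\kappa}$; since the hypothesis gives \emph{no rate} for $\norm{\dev^{(\lambda)}-\devlim}_2\to0$, you would need $\kappa\ge 1$, but confining a bridge with fluctuations of order $\lambda^{-1/2}$ to a tube of width $\lambda^{-1}$ costs probability $e^{-c\lambda}$, an order-$\lambda$ loss in the exponent; truncating $\dev^{(\lambda)}-\devlim$ does not help because the truncated part is still only $L^2$-small. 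The failure is not merely an artifact of the bound: take $\dev^{(\lambda)}=\devlim-\lambda^{a}\ind_{\{|z-\geodesic(u)|\le\lambda^{-b}\}}$ with $2a<b<1$ and $a$ close to $\tfrac12$. Then $\dev^{(\lambda)}\to\devlim$ in $L^2$, yet the tilted bridge's occupation time of the thin strip concentrates near $\lambda^{1/2-b}(t-s)$, so on any fixed tube around $\geodesic$ the exponent acquires a negative contribution of order $\lambda^{3/2+a-b}\gtrsim\lambda$; a lower bound obtained by restricting to a single tube around the geodesic is therefore genuinely false to the required precision, no matter how the width is chosen. (The proposition still holds for such $\dev^{(\lambda)}$ because the bridge can shift sideways at negligible action cost to avoid the thin bad set --- but your restriction to one tube forbids exploiting exactly this.)

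The paper's proof repairs precisely this point, and in two ways that differ from your plan: it uses Jensen's inequality after the Girsanov tilt (so only first moments of the potential along the bridge are needed, never a tube event), and, more importantly, it does not work with the single geodesic but averages the tilt over an \emph{order-one} family of perturbed paths $\geodesic_a=\geodesic+a\perturbation$, $a\in[0,a_0]$, with $\perturbation(u)=(u-s)^{3/4}\ind_{[s,(t+s)/2]}(u)+(t-u)^{3/4}\ind_{((t+s)/2,t]}(u)$ and $a_0$ fixed independently of $\lambda$. The change of variables $a\mapsto z=\geodesic(u)+W^0_\lambda(u)+a\perturbation(u)$ turns the $a$-average into a spatial integral over a window of width $a_0\perturbation(u)$, and Cauchy--Schwarz gives the bound $\norm{\dev^{(\lambda)}-\devlim}_2\big(\tfrac{1}{a_0}\int_s^t\perturbation^{-1}\d u\big)^{1/2}$, with \emph{no} power of $\lambda$, uniformly over geodesic pairs; this is \eqref{e.perturb.control1}. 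The price --- that $\devlim$ and the kinetic term are now evaluated along $\geodesic_a$ rather than $\geodesic$ --- is paid by \eqref{e.perturb.control2}--\eqref{e.perturb.control3} using the Lipschitz property of $\devlim$ away from $t=0,2$ together with \eqref{e.devlim.bd} and \eqref{e.elldot.bd}, and vanishes in the iterated limit $\lambda\to\infty$ first, $a_0\to0$ second. If you want to salvage your write-up, replace the tube restriction by Jensen and the single geodesic by this averaged family; the rest of your outline (uniformity, endpoint behavior via \eqref{e.r.bound} and $\devlim\le0$) then goes through in the same spirit as the paper.
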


Proposition~\ref{p.lwbd} already gives the lower half of Theorem~\ref{t.main}.
To see how, set $ (s,y) = (0,0) $ and $ \dev^{(\lambda)}=\devm_\lambda $ in Proposition~\ref{p.lwbd}, note that $ \hlim(0,0):=0 $ and that any $ (t,x)\in(0,2]\times\R $ is connected to $ (0,0) $ through a geodesic, and combine the result with \eqref{e.FKformula} for $ \dev=\devm_\lambda $ (note that $ \Zfn_\lambda[\devm_\lambda] =: \exp(\lambda\hm_\lambda) $).
Doing so gives the following.
\begin{cor}[The lower half of Theorem~\ref{t.main}]
\label{c.main.lwbd}
For any $ \delta>0 $,
\begin{align*}
	\liminf_{\lambda\to\infty} \ \inf\Big\{ \hm_\lambda(t,x) - \hlim(t,x) \, : (t,x) \in [\delta,2]\times[-\delta^{-1},\delta^{-1}] \Big\} \geq 0.
\end{align*}
\end{cor}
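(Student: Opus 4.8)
The plan is to derive Corollary~\ref{c.main.lwbd} directly from Proposition~\ref{p.lwbd} by unwinding the Feynman--Kac formula~\eqref{e.FKformula}; essentially all of the analytic content sits in Proposition~\ref{p.lwbd}, so what remains is bookkeeping. First I would apply Proposition~\ref{p.lwbd} with $ \dev^{(\lambda)} = \devm_\lambda $ --- legitimate because $ \devm_\lambda \to \devlim $ in $ L^2 $ by \eqref{e.devm.L2convergence} --- and specialize the base point to $ (s,y) = (0,0) $. Since $ \hlim(0,0) = 0 $ and every $ (t,x) $ with $ t>0 $ is joined to the origin by a geodesic, restricting the infimum in Proposition~\ref{p.lwbd} to such pairs --- which only makes the infimum larger --- yields
\begin{align*}
	\liminf_{\lambda\to\infty}\, \inf\Big\{ \tfrac{1}{\lambda}\log\FK_\lambda[\devm_\lambda](0,0;t,x) - \hlim(t,x) - \tfrac{x^2}{2t} \ : \ (t,x)\in(0,2]\times\R \Big\} \geq 0.
\end{align*}

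Next I would insert the Feynman--Kac formula: by \eqref{e.FKformula} applied to $ \dev = \devm_\lambda $, together with $ \Zfn_\lambda[\devm_\lambda] = \exp(\lambda\hm_\lambda) $ and $ \hk_\lambda(t,x) = \exp(-\lambda\tfrac{x^2}{2t})\sqrt{\lambda/(2\pi t)} $, one has $ \hm_\lambda(t,x) = -\tfrac{x^2}{2t} + \tfrac{1}{2\lambda}\log\tfrac{\lambda}{2\pi t} + \tfrac{1}{\lambda}\log\FK_\lambda[\devm_\lambda](0,0;t,x) $, hence
\begin{align*}
	\hm_\lambda(t,x) - \hlim(t,x) = \Big( \tfrac{1}{\lambda}\log\FK_\lambda[\devm_\lambda](0,0;t,x) - \hlim(t,x) - \tfrac{x^2}{2t} \Big) + \frac{1}{2\lambda}\log\frac{\lambda}{2\pi t},
\end{align*}
where the quadratic term $ x^2/(2t) $ produced by the heat kernel exactly cancels the one appearing in Proposition~\ref{p.lwbd}, leaving only the entropic correction $ \tfrac{1}{2\lambda}\log\tfrac{\lambda}{2\pi t} $.

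To conclude, fix $ \delta>0 $ and take the infimum over $ K := [\delta,2]\times[-\delta^{-1},\delta^{-1}] $. On $ K $ we have $ t\in[\delta,2] $, so $ \tfrac{1}{2\lambda}\log\tfrac{\lambda}{2\pi t} \geq \tfrac{1}{2\lambda}\log\tfrac{\lambda}{4\pi} $, which tends to $ 0 $ as $ \lambda\to\infty $ uniformly over $ K $; and since $ K \subset (0,2]\times\R $ and every point of $ K $ is joined to the origin by a geodesic, the infimum over $ K $ of the parenthesized term is bounded below by the quantity whose liminf is estimated in the first paragraph. Combining the two bounds gives $ \liminf_{\lambda\to\infty}\inf_K(\hm_\lambda - \hlim) \geq 0 $, which is the claim. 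I do not anticipate a genuine obstacle: the only points requiring care are verifying that the $ x^2/(2t) $ terms cancel and that the logarithmic correction is uniformly negligible (the restriction $ t\geq\delta $, though not strictly needed for this direction, makes the latter transparent); all the substance lies in Proposition~\ref{p.lwbd}.
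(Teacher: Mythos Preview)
Your proposal is correct and follows exactly the route sketched in the paper: specialize Proposition~\ref{p.lwbd} to $(s,y)=(0,0)$ and $\dev^{(\lambda)}=\devm_\lambda$, use that every $(t,x)\in(0,2]\times\R$ is connected to the origin by a geodesic with $\hlim(0,0)=0$, and combine with the Feynman--Kac formula~\eqref{e.FKformula}. Your write-up in fact spells out more carefully than the paper does the cancellation of the $x^2/(2t)$ term and the uniform negligibility of the heat-kernel prefactor $\tfrac{1}{2\lambda}\log\tfrac{\lambda}{2\pi t}$.
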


\begin{proof}[Proof of Proposition~\ref{p.lwbd}]
We start by deriving an inequality:
For any $ \dev\in\Lsp^2([0,2]\times\R) $ and $ \Path \in H^1((s,y)\to(t,x)) $,
\begin{align}
	\label{e.girsanov+jensen}
	\frac{1}{\lambda} \log \FK_\lambda[\dev](s,y;t,x)
	\geq
	\int_s^{t} \E\big[ \dev(t-u, (W^0_\lambda-\Path)(u)) \big] \, \d u
	-
	\frac{1}{2} \int_s^t \dot{\Path}^2 \, \d u
	+
	\frac{(x-y)^2}{2(t-s)},
\end{align}
where $ W^0_\lambda = \mathrm{BB}_\lambda((t,0)\to(s,0)) $, which goes \emph{backward} in time; see \eqref{e.bb}.
To derive this inequality, take \eqref{e.FKexp} and express the Brownian bridge as in \eqref{e.bb}.
Let $ \Path_\text{tilted}(u) := \Path(u) - \frac{1}{t-s}((u-s)x+(t-u)y) $.
Apply the Cameron--Martin--Girsanov theorem to transform the measure of the standard Brownian motion $ B $ to that of $ B+\lambda^{1/2}\Path_\text{tilted} $:
\begin{align*}
	\FK_\lambda[\dev](s,y;t,x)
	=
	\E\Big[ \exp\Big( 
		\int_s^t \lambda \dev(u,(W^0_\lambda-\Path)(u))\, \d u 
		+ \lambda^{1/2} \int_s^t \dot{\Path}_\text{tilted}\, \d B 
		- \frac{\lambda}{2} \int_s^{t} \dot{\Path}_\text{tilted}^2 \,\d u 
	\Big) \Big].
\end{align*}
Next, apply Jensen's inequality $ \frac{1}{\lambda} \log\E[\exp(\ldots)] \geq \frac{1}{\lambda}\E[(\ldots)] $ to the right side. 
Note that $ \E[\int_{s}^t \dot{\Path}_\text{tilted} \,\d B] = 0 $ and that $ \frac12 \int_s^t \dot{\Path}^2_\text{tilted} \,\d u = \frac12 \int_s^t \dot{\Path}^2 \,\d u - (x-y)^2/(2(t-s)) $.
We see that \eqref{e.girsanov+jensen} follows.

Let us explain the idea of the rest of the proof.
Fix $ (s,y) \xrightarrow{\scriptscriptstyle\mathrm{geod.}\geodesic} (t,x) $.
In \eqref{e.girsanov+jensen}, set $ \Path\mapsto-\geodesic $ and $ \dev \mapsto \dev^{(\lambda)} $:
\begin{align}
	\label{e.girsanov+jensen.1}
	\frac{1}{\lambda} \log \FK_\lambda[\dev^{(\lambda)}](s,y;t,x)
	\geq
	\int_s^{t} \E\big[ \dev^{(\lambda)}(u, (\geodesic+W^0_\lambda)(u)) \big] \, \d u
	-
	\frac{1}{2} \int_s^t \dot{\geodesic}^2 \, \d u
	+
	\frac{(x-y)^2}{2(t-s)}.
\end{align}
On the right side, only the first term depends on $ \lambda $.
To guess what the limit of it could be, recall that $ \dev^{(\lambda)} \to\devlim $ in $ L^2 $, and note that $ W^0_\lambda $ converges in law to the constant-zero path as $ \lambda\to\infty $.
We hence expect the limit to be $ \int_s^t \devlim(u,\geodesic(u))\, \d u $.
Once the first term on the right side of \eqref{e.girsanov+jensen.1} is replaced by $ \int_s^t \devlim(u,\geodesic(u))\, \d u $, 
by \eqref{e.hlim.geodesic}, the entire right side becomes $ \hlim(t,x) - \hlim(s,y) + (x-y)^2/(2(t-s)) $, which is the desired lower bound.
Hence it suffices to pass the first term on the right hand of \eqref{e.girsanov+jensen.1} to the limit.
\emph{However}, taking such a limit seems challenging, because we only have $ \dev^{(\lambda)} \to \devlim $ in $ L^2 $, not in $ L^\infty $.
To circumvent this challenge, we perturb $ \geodesic $ and average over the perturbation:
\begin{align}
	\label{e.perturbation}
	&\geodesic_a(u) := \geodesic(u) + a \perturbation(u),
	&&
	\perturbation(u) := (u-s)^{3/4}\ind_{[s,(t+s)/2]}(u) + (t-u)^{3/4}\ind_{((t+s)/2,t]}(u).&
\end{align}
In \eqref{e.girsanov+jensen}, substitute in $ \Path\mapsto-\geodesic_a $ and $ \dev \mapsto \dev^{(\lambda)} $. 
For small $ a_0>0 $, apply $ \frac{1}{a_0} \int_0^{a_0}(\Cdot)\, \d a $ to the result.
We have
\begin{align}
	\label{e.girsanov+jensen.2}
	\frac{1}{\lambda} \log \FK_\lambda[\dev^{(\lambda)}](s,y;t,x)
	\geq
	\frac{1}{a_0} \int_0^{a_0}\hspace{-6pt}\int_s^{t} \E\big[ \dev^{(\lambda)}(u, (\geodesic_a+W^0_\lambda)(u)) \big] \d u \d a 
	-
	\frac{1}{2a_0} \int_0^{a_0}\hspace{-6pt}\int_s^t \dot{\geodesic}_a^2 \d u \d a 
	+
	\frac{(x-y)^2}{2(t-s)}.
\end{align}
Compared with \eqref{e.girsanov+jensen.1}, the right side of \eqref{e.girsanov+jensen.2} is better behaved.
We will take $ \lambda\to\infty $ first and $ a_0\to 0 $ later.

Below, we will show that, uniformly over $ (s,y) \xrightarrow{\scriptscriptstyle\mathrm{geod.}} (t,x) $,
\begin{align}
	\label{e.perturb.control1}
	\lim_{\lambda\to\infty} \frac{1}{a_0} &\int_s^{t} \int_0^{a_0}\E\big[ \big| \dev^{(\lambda)}(u, (\geodesic_a+W^0_\lambda)(u))-\devlim(u, (\geodesic_a+W^0_\lambda)(u)) \big| \big] \, \d a \d u = 0,
\\
	\label{e.perturb.control2}
	\lim_{(a_0,\lambda)\to(0,\infty)} \frac{1}{a_0} &\int_s^{t} \int_0^{a_0}\E\big[ \big| \devlim(u, (\geodesic_a+W^0_\lambda)(u)) - \devlim(u,\geodesic(u)) \big| \big] \, \d a \d u = 0,
\\
	\label{e.perturb.control3}
	\lim_{a_0\to 0} \Big(\frac{1}{2a_0} &\int_0^{a_0}\int_s^t \dot{\geodesic}_a^2 \, \d u \d a  - \frac{1}{2} \int_s^t \dot{\geodesic}^2 \, \d u \Big) = 0.
\end{align}
Once these limits are obtained, taking the iterated limit $ \lim_{a_0\to 0} \lim_{\lambda\to\infty} $ on both sides of \eqref{e.girsanov+jensen.2} and combining the result with \eqref{e.hlim.geodesic} completes the proof.

To prove \eqref{e.perturb.control1}, consider the expression within the limit, swap the integrals and the expectation, and perform a change of variables $ a\mapsto z := \geodesic(u) + W^0_\lambda(u) + a \perturbation(u) $.
Doing so gives
\begin{align*}
	\E\Big[ \frac{1}{a_0}  \int_s^{t} \frac{1}{\perturbation(u)}\int_{(\geodesic+W^0_\lambda)(u)}^{{(\geodesic+W^0_\lambda+a_0\perturbation)(u)}}  |\dev^{(\lambda)}-\devlim|(u,z) \,\d z \d u \Big].
\end{align*}
Apply the Cauchy--Schwarz inequality over the double integral to bound the expression by
$
	\E[ \norm{\dev^{(\lambda)}-\devlim}_{2} (\frac{1}{a_0} \int_s^t \frac{1}{\perturbation}\,\d u )^{1/2} ].
$
The expression within the last expectation is deterministic; the last integral is uniformly bounded over $ s<t\in[0,2] $ and does not depend on $ x,y $. 
Sending $ \lambda\to\infty $ using $ \norm{\dev^{(\lambda)}-\devlim}_{2} \to 0 $ gives the desired result~\eqref{e.perturb.control1}.

To prove \eqref{e.perturb.control2}, fix a small $ \delta>0 $ and divide the $ u $ integral into $ [s,t]\cap([0,\delta)\cup(2-\delta,2]) $ and $ [s,t]\cap[\delta,2-\delta] $.
For the former, by \eqref{e.devlim.bd}, the contribution is bounced by $ c\, \delta^{1/3} $.
For the latter, note that the function $ \devlim $ is uniformly Lipschitz on $ [\delta,2-\delta]\times\R $. 
The contribution of the latter is hence bounded by $ \int_s^t c(\delta) \,(|a_0|+ \E|W^0_\lambda(u)|) \, \d u $, where $ c(\delta)<\infty $ depends only on $ \delta $.
For any fixed $ \delta>0 $, the last integral converges to zero as $ {(a_0,\lambda)\to(0,\infty)} $ uniformly over $ s \leq t\in[0,2] $, and does not depend on $ x,y $. 
The desired result~\eqref{e.perturb.control2} follows.

Move onto \eqref{e.perturb.control3}. 
Let $ t_\text{exit} := \inf\{ u : |\geodesic(u)| > |\ell(u)| \} $ be the time that $ \geodesic $ exits the lens region, with the convention $ \inf\emptyset = +\infty $; see Figure~\ref{f.geodesics}.
Expand the expression within the limit in \eqref{e.perturb.control3} and evaluate the integrals over $ a $ to get
\begin{align*}
	\frac{a_0^2}{6} \int_s^t \dot{\perturbation}^2 \, \d u 
	+ 
	\frac{a_0}{2}\int_{[s,t]\cap[0,t_\text{exit}]} \dot{\perturbation} \, \dot{\geodesic} \, \d u 
	+
	\frac{a_0}{2} \int_{[s,t] \cap[t_\text{exit},2]} \dot{\perturbation} \,\dot{\geodesic} \, \d u .
\end{align*}
Given the prefactors, which converge to $ 0 $ as $ a_0\to 0 $, it suffices to argue that the integrals are uniformly (in $ \geodesic $) bounded.
This holds for the first integral since it is independent of $ \geodesic $.
For the second integral, we have $ |\dot{\geodesic}(u)| \leq |\dot{\ell}(u)| $.
This and \eqref{e.elldot.bd} shows the uniform boundedness.
For the third integral, the derivative $ \dot{\geodesic} $ is a constant $ \pm\dot{\ell}(t_\text{exit}) $; see Figure~\ref{f.geodesics}.
The third integral is hence $ \pm\dot{\ell}(t_\text{exit})(\perturbation|_{\partial( [s,t] \cap[t_\text{exit},2] )}) $.
Using \eqref{e.elldot.bd} and \eqref{e.perturbation}, it is not hard to show the uniform boundedness.
The desired result~\eqref{e.perturb.control3} follows.
\end{proof}

\section{Spatial H\"{o}lder continuity}
\label{s.holder}

\begin{prop}
\label{p.holder}
For any $ \delta>0 $, there exists a constant $ c(\delta)\in(0,\infty) $ such that, for all $ (t,x),(t,x') \in [\delta,2]\times[-\delta^{-1},\delta^{-1}] $ and for all $ \lambda \geq \frac{1}{c(\delta)} $, we have
$
	| \hm_\lambda(t,x) - \hm_\lambda(t,x') | \leq c(\delta) |x-x'|^{\frac{2}{13}-\delta}.
$
\end{prop}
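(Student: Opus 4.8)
\emph{Sketch of the argument.} The plan is to compare $\Zm_\lambda(t,x)$ and $\Zm_\lambda(t,x')$ via the Feynman--Kac formula~\eqref{e.FKformula.stot} at a carefully chosen intermediate time $t-\Delta$, and then to pass to $\hm_\lambda=\tfrac1\lambda\log\Zm_\lambda$. By the symmetry $x\leftrightarrow x'$ it suffices to bound $\hm_\lambda(t,x)-\hm_\lambda(t,x')$ from above. Since $\Zm_\lambda(t,x)\le\hk_\lambda(t,x)\le(\lambda/(2\pi\delta))^{1/2}$ forces $\hm_\lambda\le o(1)$ on the box while Corollary~\ref{c.main.lwbd} gives $\hm_\lambda\ge\hlim-o(1)$ with $\hlim$ bounded on $[\delta,2]\times[-\delta^{-1},\delta^{-1}]$, the function $\hm_\lambda$ is uniformly bounded there; hence the claim is automatic once $|x-x'|$ exceeds a $\delta$-dependent threshold (enlarging $c(\delta)$), and one may assume $\eta:=|x-x'|$ is small. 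The scale $\Delta=\Delta(\eta,\lambda)\in(0,\delta)$ is fixed at the end.

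First I would apply~\eqref{e.FKformula.stot} with $s=t-\Delta$ to both $\Zm_\lambda(t,x)$ and $\Zm_\lambda(t,x')$. For $\Zm_\lambda(t,x)$, I use $\devm_\lambda\le 0$ for large $\lambda$ (Appendix~\ref{s.a.devm<=0}), so $\FK_\lambda[\devm_\lambda]\le 1$ and $\Zm_\lambda(t,x)\le\int_\R\hk_\lambda(\Delta,x-y)\Zm_\lambda(t-\Delta,y)\,\d y$. For $\Zm_\lambda(t,x')$, I restrict the integral to $|y-x'|\le L$ (a second parameter, $L$ of order $\sqrt\Delta$) and bound the Feynman--Kac weight there from below by Jensen's inequality, exactly as in the first step of the proof of Proposition~\ref{p.lwbd}:
\begin{align*}
	\tfrac1\lambda\log\FK_\lambda[\devm_\lambda](t-\Delta,y;t,x')
	&\ \ge\ \int_{t-\Delta}^{t}\E\big[\devm_\lambda(u,W_\lambda(u))\big]\,\d u \\
	&\ \ge\ -\Big(\int_{t-\Delta}^{t}\norm{\devm_\lambda(u,\Cdot)}_2^2\,\d u\Big)^{1/2}\Big(\int_{t-\Delta}^{t}\norm{q_u}_2^2\,\d u\Big)^{1/2}\ =:\ -\beta,
\end{align*}
where $q_u$ is the Gaussian law of the backward Brownian bridge~\eqref{e.bb} at time $u$. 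The second factor is of order $(\lambda\Delta)^{1/4}$ when $[t-\Delta,t]$ stays away from $\{0,2\}$ (an extra power of $\Delta$ enters near those times, where $\devlim$ blows up), and the first factor is $\le\norm{\devm_\lambda}_2$, bounded uniformly in $\lambda$ by~\eqref{e.devm.L2convergence}; crucially the bound on $\beta$ does not depend on $y$ or $x'$. This gives $\Zm_\lambda(t,x')\ge e^{-\lambda\beta}\int_{|y-x'|\le L}\hk_\lambda(\Delta,x'-y)\Zm_\lambda(t-\Delta,y)\,\d y$.

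Next I would run a Gaussian comparison of the two heat kernels. On the near region $|y-x'|\le L$ one has $\hk_\lambda(\Delta,x-y)/\hk_\lambda(\Delta,x'-y)=\exp\big(-\tfrac{\lambda}{2\Delta}(x-x')(x+x'-2y)\big)\le\exp\big(\tfrac{\lambda\eta(\eta+2L)}{2\Delta}\big)$. On the far region $|y-x'|>L$ I would use $\Zm_\lambda(t-\Delta,y)\le\hk_\lambda(t-\Delta,y)$, the Chapman--Kolmogorov identity $\hk_\lambda(\Delta,x-y)\,\hk_\lambda(t-\Delta,y)=\hk_\lambda(t,x)\,g_{t,\Delta}(y)$ for a Gaussian density $g_{t,\Delta}$, and a Gaussian tail estimate, to bound the far contribution by $\hk_\lambda(t,x)\cdot 2e^{-cL^2\lambda/\Delta}$; choosing $L$ a large-enough multiple of $\sqrt\Delta$ makes this $\le\tfrac12\Zm_\lambda(t,x)$, because $\Zm_\lambda(t,x)\ge e^{\lambda(\hlim(t,x)-o(1))}$ by Corollary~\ref{c.main.lwbd} and $\hlim$ is bounded below on the box. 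Combining the three estimates and absorbing the $\tfrac12\Zm_\lambda(t,x)$ yields
\begin{align*}
	\hm_\lambda(t,x)-\hm_\lambda(t,x')\ \le\ \frac{\log 2}{\lambda}\ +\ \frac{\eta^2}{2\Delta}\ +\ \frac{C(\delta)\,\eta}{\sqrt\Delta}\ +\ \beta .
\end{align*}

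It then remains to choose $\Delta$ as a function of $\eta$ (and $\lambda$) so that the right-hand side is $\le c(\delta)\,\eta^{2/13-\delta}$ for every $\lambda\ge1/c(\delta)$. I expect the main obstacle to be precisely this optimization under the constraint of uniformity in $\lambda$: the Feynman--Kac-weight error $\beta$ carries the factor $(\lambda\Delta)^{1/4}$ inherent to the heat kernel at scale $\Delta/\lambda$, which forces $\Delta$ to be small relative to $1/\lambda$, whereas the heat-kernel comparison term $\eta/\sqrt\Delta$ forces $\Delta$ not too small; reconciling these constraints so that no residual power of $\lambda$ survives, together with the worse behaviour of the $\beta$-estimate near the singular times $0$ and $2$ (where already $\hlim$ is only H\"older-$\tfrac12$ at $(2,0)$), is what should dictate the non-standard exponent $\tfrac2{13}-\delta$, and may in addition require a sharper control of $\int_{t-\Delta}^{t}\norm{\devm_\lambda(u,\Cdot)}_2^2\,\d u$ or a separate treatment of extremely small $|x-x'|$. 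The only external inputs are Corollary~\ref{c.main.lwbd}, the elementary bound $\hm_\lambda\le o(1)$, the non-positivity of $\devm_\lambda$, and~\eqref{e.devm.L2convergence}.
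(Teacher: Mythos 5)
Your skeleton — splitting at an intermediate time $t-\Delta$, using $\devm_\lambda\le 0$ to bound the Feynman--Kac weight by $1$ at the endpoint $x$, restricting the $x'$-integral to a window $|y-x'|\le L$, comparing the two heat kernels there, and absorbing the far region using the a priori lower bound $\Zm_\lambda\ge e^{-c\lambda}$ (which is legitimate: the paper itself gets it from Proposition~\ref{p.lwbd} and \eqref{e.hlim.bd}) — matches the paper's first and third ``surgeries''. The genuine gap is the step you yourself flag: lower-bounding the Feynman--Kac weight on $[t-\Delta,t]$ by Jensen and then Cauchy--Schwarz against the bridge density $q_u$. Since $\norm{q_u}_2\asymp\big(\lambda\Delta/((u-t+\Delta)(t-u))\big)^{1/4}$, your error is $\beta\lesssim\norm{\devm_\lambda}_2\,(\lambda\Delta)^{1/4}$, and minimizing $C(\lambda\Delta)^{1/4}+\eta\Delta^{-1/2}$ over $\Delta$ leaves a quantity of order $\eta^{1/3}\lambda^{1/6}$: a residual power of $\lambda$ survives for \emph{every} choice of $\Delta(\eta,\lambda)$, so no exponent, $\frac2{13}$ or otherwise, can come out uniformly in $\lambda$. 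Sharpening $\int_{t-\Delta}^{t}\norm{\devm_\lambda(u,\Cdot)}_2^2\,\d u$ does not rescue this: \eqref{e.devm.L2convergence} carries no rate in $\lambda$, and even the limiting profile contributes order $\Delta$ (worse near $t=2$), which still leaves $\lambda^{1/4}\Delta^{3/4}$ against $\eta\Delta^{-1/2}$ and hence a residual power of $\lambda$. So the obstruction is not an optimization to be ``reconciled''; the Jensen step itself is too lossy, because the only smoothing it exploits is the bridge's own fluctuation, of size $\sqrt{\Delta/\lambda}$, which is what injects the factor $(\lambda\Delta)^{1/4}$.

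The paper's proof avoids exactly this by never taking an expectation of $\devm_\lambda$ over the short window. Instead (Surgeries I--II) it inserts the indicator of the event $\big\{\int_{t-\Delta}^{t}|\devm_\lambda(u,W(u))|\,\d u\le\Delta^{b_1}\big\}$, which costs only the deterministic factor $e^{-\lambda\Delta^{b_1}}$ --- note the exponent is a pure power of $\Delta$, with no $\lambda$ attached to it --- and then shows that this event has probability bounded below uniformly in the endpoints and in $\lambda$. That probability bound uses Markov's inequality together with the same device as in the proof of Proposition~\ref{p.lwbd}: a Girsanov shift by a deterministic perturbation $a\perturbation(u)$, averaged over $a\in[0,1]$, followed by a change of variables and Cauchy--Schwarz, so the $L^2$ bound on $\devm_\lambda$ enters with no factor of $\lambda$ at all, precisely because the smoothing parameter $a$ ranges over an interval of order one rather than over the bridge's $\sqrt{\Delta/\lambda}$-scale fluctuations. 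If you replace your $\beta$-step by this event-based truncation, the rest of your argument (heat-kernel comparison on $|y-x'|\le L$, far-tail absorption, extension of the time integral using $\devm_\lambda\le 0$) closes essentially as in the paper, and the choice $\Delta=|x-x'|^{22/13}$ with suitable auxiliary exponents produces the stated modulus $|x-x'|^{\frac2{13}-\delta}$.
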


\begin{proof}
We begin by describing the structure of the proof.
By the Feynman--Kac formula~\eqref{e.FKformula}, it suffices to show that, for all $ (t,x),(t,x') \in [\delta,2]\times[-\delta^{-1},\delta^{-1}] $,
\begin{align}
	\label{e.p.holder.goal}
	\FKm_\lambda(0,0;t,x)
	:=
	\E\Big[ \exp\Big( \int_0^t \lambda\devm_\lambda(u,W^{x}_\lambda(u))\, \d u \Big) \Big]
	\geq 
	\FKm_\lambda(0,0;t,x') \frac{\hk_\lambda(t,x')}{\hk_\lambda(t,x)} \cdot e^{-\lambda c(\delta)|x-x'|^{\frac{2}{13}-\delta}},
\end{align}
where $ W^x_\lambda \sim \mathrm{BB}((t,x)\to(0,0)) $ and $ W^{x'}_\lambda \sim \mathrm{BB}((t,x')\to(0,0)) $.
Once this is done, taking the logarithm on both sides and swapping $ x $ and $ x' $ give the desired result.
To prove \eqref{e.p.holder.goal}, we will start from the left side, and perform a sequence of ``surgeries''.
The goal is to convert the expectation on the left side to the expectation on the right side.
Each surgery will reduce the expression by a factor, and we will need to control the factor.

The first surgery is a truncation in time.
Let $ s \in (0,t) $ be close to $ t $, which will be fixed later, and consider  
$
	\calA_1
	:= 
	\{ 
		\int_{s}^{t} |\devm_\lambda( u, W^{x}_\lambda (u)) | \, \d u \leq  (t-s)^{b_1}
	\}. 
$
Hereafter, our analysis will involve powers of $ (t-s) $ and $ |x-x'| $.
The exponents $ b_1,b_2,\ldots $ will all be $ \in (0,1) $ and be fixed later.
Within the expectation on the left side of \eqref{e.p.holder.goal}, introduce the indicator $ \ind_{\calA} $.
Doing so makes the expectation smaller. 
Given the indicator, the integral from $ s $ to $ t $ is at least $ -(t-s)^{b_1} $.
Hence
\begin{align}
	\tag{Surgery I}
	\label{e.p.holder.step1}
	&\text{left side of \eqref{e.p.holder.goal}}
	\geq
	e^{-\lambda (t-s)^{b_1}}
	I_1,&
	&
	I_1:= \E\Big[ \ind_{\calA} \exp\Big( \int_{0}^s \lambda\devm_\lambda (u, W^{x}_\lambda(u))\, \d u \Big) \Big].&
\end{align}

We proceed to the second surgery, which is applied to $ I_1 $.
Within the expectation in \eqref{e.p.holder.step1}, condition on $ W^{x}_\lambda(s) =y $.
Under such a conditioning, the two segments of the bridge $ \{W^{x}_\lambda(u)\}_{u\in[s,t]} $ and $ \{W^{x}_\lambda(u)\}_{u\in[0,s]} $ are independent, and their laws are respectively $ \mathrm{BB}((t,x)\to(s,y)) $ and $ \mathrm{BB}((s,y)\to (0,0)) $.
Let
\begin{align}
	\label{e.surgery.f}
	&f(\lambda,y) := \P\Big[  \int_{s}^{t} \big|\devm_\lambda (u, \bar{W}_\lambda(u)\big) \big| \, \d u \leq  (t-s)^{b_2} \, \Big],
	&&
	\bar{W}_\lambda \sim \mathrm{BB}((t,x)\to(s,y)) .&
\end{align}
We have
$
	I_1 = \E[ f(\lambda,W^{x}_\lambda(s))\, \FKm_\lambda(0,0;s,W^{x}_\lambda(s)) ].
$

To complete the second surgery, we need a lower bound on $ f(\lambda,z) $ that is uniform in $ z\in\R $.
In \eqref{e.surgery.f}, express the bridge as a Brownian motion (similar to \eqref{e.bb}) and apply the Cameron--Martin--Girsanov theorem.
We have, for any $ \Path \in H^1((t,x)\to(s,y)) $,
\begin{align}
	\label{e.surgery.f2.control1}
	f(\lambda,y) 
	= 
	\E\Big[ e^{- \frac{\lambda}{2} \int_s^{t} \dot{\Path}^2 \d u} e^{ \lambda^{1/2} \int_{s}^{t} \dot{\Path} \d B} \ind_{\calA_2} \Big],
	&&
	\calA_2 := \Big\{\int_s^{t} |\devm_\lambda (u, \bar{W}_{\lambda} - \Path)| \d u \leq (t-s)^{b_1} \Big\}.
\end{align}
Let $ \perturbation(u) := (u-s)^{b_2}\ind_{[s,(t+s)/2]}(u) + (t-u)^{b_2}\ind_{((t+s)/2,t]}(u) $ and set $ \Path = a \perturbation $, for $ a\in\R $.
The first exponential factor in \eqref{e.surgery.f2.control1} is at least $ \exp(-c \lambda \,(t-s)^{2b_2-1}) $. 
To control the second exponential factor, introduce the indicator $ \ind_{\calA_3} $, where
$ 
	\calA_3 := \{ |\int_{s}^{t} \dot{\Path} \d B| \leq \lambda^{1/2} (t-s)^{b_3} \}.
$
We have
$
	f(\lambda,y) 
	\geq 
	\exp(-\lambda\,c\,((t-s)^{2b_2-1}+(t-s)^{b_3}))
	\P[ \calA_2 \cap \calA_3 ].
$
For the last factor, use $ \P[ \calA_2\cap\calA_3 ] \geq 1 - \P[\calA_2^\mathrm{c}] - \P[\calA_3^\mathrm{c}] $ and use Markov's inequality to bound
\begin{align*}
 	\P[\calA_2^\mathrm{c}] &\leq (t-s)^{-b_1} \int_s^{t} \E\big[\big|\devm_\lambda (u, \bar{W}_{\lambda} - a\perturbation)\big| \big] \d u,
\\
	\P[\calA_3^\mathrm{c}] &\leq \big(\lambda^{1/2} (t-s)^{b_3}\big)^{-2} \E\Big[ \Big|\int_s^t \dot\Path \d B \Big|^2 \Big] \leq \frac{c}{\lambda}(t-s)^{2b_2-2b_3-1}.
\end{align*}
Putting these inequalities together and average the result over $ a\in[0,1] $.
We have
\begin{align*}
	f(\lambda,y) 
	\geq 
	e^{-\lambda c\,((t-s)^{2b_2-1}+(t-s)^{b_3})}
	\Big( 1 - \frac{c}{\lambda}(t-s)^{2b_2-2b_3-1} - (t-s)^{-b_1} \int_0^1 \int_s^{t} \E\big[\big|\devm_\lambda (u, \bar{W}_{\lambda} - a\perturbation)\big| \big] \d u \d a  \Big).
\end{align*}
For the last term, apply the same argument in the proof of \eqref{e.perturb.control1}:
Swap the integrals with the expectation, perform a change of variables $ a\mapsto y := \bar{W}_{\lambda} + a\perturbation $, and apply the Cauchy--Schwarz inequality over the integrals.
We bound the term by $ c\, \norm{\devm_\lambda}_2 (t-s)^{(1-b_2)/2} \leq c\, (t-s)^{(1-b_2)/2} $.
This gives a lower bound of $ f(\lambda,y) $.
Recall the expression of $ I_1 $ from after \eqref{e.surgery.f}.
We have now completed the second surgery 
\begin{align}
	\tag{Surgery II}
	\label{e.p.holder.step2}
	I_1 
	\geq 
	e^{-\lambda c\,((t-s)^{2b_2-1}+(t-s)^{b_3})}
	\big( 1 - \big(\tfrac{c}{\lambda}(t-s)^{2b_2-2b_3-1} + c\,(t-s)^{\frac12-b_1-\frac{b_2}{2}}\big) \big)_+ \, I_2,
\end{align}
where $ I_2 := \E[ \FKm_\lambda(0,0;s,W^{x}_\lambda(s)) ] $.

Before moving on to the third surgery, let us fix the exponents and combine \eqref{e.p.holder.step1} and \eqref{e.p.holder.step2}. 
Set $ b_1=\frac{1}{11} $, $ b_2=\frac{7}{11} $, and $ b_3 = \frac{1}{11} $.
Assume $ |t-s| $ is small enough so that in \eqref{e.p.holder.step2} we have $ (1-(\ldots))_+ \geq \exp(-c\,(\ldots)) $.
Using this in \eqref{e.p.holder.step2} and combining the result with \eqref{e.p.holder.step1} give
\begin{align}
	\tag{Surgery I+II}
	\label{e.p.holder.step12}
	&\text{(left side of \eqref{e.p.holder.goal})}
	\geq 
	e^{-\lambda c\,(t-s)^{\frac{1}{11}}}I_2,&
	&
	I_2 := \E[ \FKm_\lambda(0,0;s,W^{x}_\lambda(s)) ].
\end{align}

In the third and last surgery, we will convert $ I_2 $ to the right side of \eqref{e.p.holder.goal}.
The first step is to convert the $ W^{x}_\lambda(s) $ in $ I_2 $ into $ W^{x'}_\lambda(s) $.
Let $ q^x(y) $ and $ q^{x'}(y) $ denote their respective probability density functions.
Write $ I_2 $ as the integral $ \int_{\R} \FKm_\lambda(0,0;s,y) q^x(y)\, \d y $.
Note that $ q^x(y) $ and $ q^{x'}(y) $ can be expressed in terms of the heat kernel.
Doing so gives $ q^x(y) = \frac{\hk_\lambda(x')}{\hk_\lambda(x)} \exp( - \lambda\frac{|x-x'|^2}{2(t-s)} ) \exp( -\lambda\frac{(x'-y)(x-x'))}{(t-s)} ) q^{x'}(y) $.
To control the last exponential factor, we restrict the integral to $ |y-x'| \leq \frac{1}{2} |x-x'|^{b_4} $.
Doing so makes the resulting integral smaller.
We have
\begin{align*}
	I_2 
	\geq 
	\frac{\hk_\lambda(x')}{\hk_\lambda(x)} e^{ - \lambda\frac{|x-x'|^2+|x-x'|^{1+b_4}}{2(t-s)} } 
	\E\big[ \FKm_\lambda(0,0;s,W^{x'}_\lambda(s)) \ind\{ |W^{x'}_\lambda(s)-x'| \leq \tfrac{1}{2}|x-x'|^{b_4} \} \big].
\end{align*}
Note that the ratio of heat kernels is expected since our goal is \eqref{e.p.holder.goal}.
In the last expectation, write $ \ind\{\ldots\} = 1-\ind\{\ldots\} $ and decompose the expectation into two accordingly.
The first expectation is equal to $ \E[ \exp( \int_{0}^{s} \lambda \devm_\lambda(u,W^{x'}_\lambda(u))\, \d u ) ] $.
Granted that $ \devm_\lambda \leq 0 $ (Appendix~\ref{s.a.devm<=0}), we extend the integral from $ [0,s] $ to $ [0,t] $, which only makes the expectation smaller.
The result is exactly $ \FKm_\lambda(0,0;t,x') $.
For the latter expectation, use $ \FKm_\lambda \leq 1 $, which follows from \eqref{e.FKexp} and $ \devm_\lambda \leq 0 $, and use $ \P[ |W^{x'}_\lambda(s)-x'| > |x-x'|^{+b_4} ] \leq \exp(-\lambda\frac{|x-x'|^{2b_4}}{c(\delta)(t-s)}) $, for all $ x,x'\in[-\delta^{-1},\delta^{-1}] $.
We have
\begin{align*}
	I_2 
	\geq 
	\frac{\hk_\lambda(x')}{\hk_\lambda(x)} e^{ - \lambda\frac{|x-x'|^2+|x-x'|^{1+b_4}}{2(t-s)} } 
	\big( \FKm_\lambda(0,0;t,x') - e^{-\lambda\frac{|x-x'|^{2b_4}}{c(\delta)(t-s)}} \big)_+.
\end{align*}
Combining Proposition~\ref{p.lwbd} and \eqref{e.hlim.bd} shows that $ \FKm_\lambda(0,0;s,y) \geq e^{-\lambda c} $.
Equipped with this bound, we factor out the first term within the last $ (\ldots)_+ $.
What remains is bounded below by $ (1-\exp(-\lambda\frac{|x-x'|^{2b_4}}{c(\delta)(t-s)}+\lambda c))_+ $.
This concludes the third surgery
\begin{align}
	\tag{Surgery III}
	\label{e.p.holder.step3}
	I_2
	\geq 
	 e^{ - \lambda\frac{|x-x'|^2+|x-x'|^{1+b_4}}{2(t-s)} } 
	\big( 1-e^{-\lambda\frac{|x-x'|^{2b_4}}{c(\delta)(t-s)}+\lambda c} \big)_+ \,
	\frac{\hk_\lambda(x')}{\hk_\lambda(x)}\FKm_\lambda(0,0;t,x').
\end{align}

It remains only to fix $ s $ and $ b_4 $.
Set $ (t-s) = |x-x'|^{\frac{22}{13}} $ and $ b_4 = \frac{11}{13}-\delta $.
We assume that $ |x-x'| $ is small enough so that the $ +\lambda c $ term in \eqref{e.p.holder.step3} is subdominant to its preceding term.
Note that this assumption does not lose any generality because the interval $ [-\delta^{-1},\delta^{-1}] $ is compact.
Also, this assumption is consistent with the previous assumption that $ (t-s) $ is small enough.
The prefactors in \eqref{e.p.holder.step12} and \eqref{e.p.holder.step3} are $ \geq \exp(-c(\delta)|x-x'|^{\frac{2}{13}-\delta}) $.
This completes the proof.
\end{proof}

\section{Upper bound}
\label{s.upbd}

Our goal here is to prove the following proposition, which together with Corollary~\ref{c.main.lwbd} gives Theorem~\ref{t.main}.

\begin{prop}[Upper half of Theorem~\ref{t.main}]
\label{p.upbd}
For any $ \delta>0 $,
\begin{align*}
	\limsup_{\lambda\to\infty} \ \sup\Big\{ \hm_\lambda(t,x) - \hlim(t,x) \, : (t,x) \in [\delta,2]\times[-\delta^{-1},\delta^{-1}] \Big\} \leq 0.
\end{align*}
\end{prop}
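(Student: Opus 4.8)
The plan is to bound $\hm_\lambda-\hlim$ from above separately on the closed lens region $\lens:=\{(t,x):|x|\le\ell(t)\}$ and on its complement, using three inputs: that $\devm_\lambda\le 0$ for $\lambda$ large (Appendix~\ref{s.a.devm<=0}), whence $\FKm_\lambda(s,y;t,x)\le 1$ for all choices of arguments; that $\devm_\lambda\to\devlim$ in $L^2$ (\eqref{e.devm.L2convergence}), which lets one take $\dev^{(\lambda)}=\devm_\lambda$ in Proposition~\ref{p.lwbd}; and the identity $\hlim(2,0)=-1$. The last holds because \eqref{e.hlim.geodesic.0} with $\geodesic=\ell$ — on which $\devlim(u,\ell(u))=-\tfrac1{2\pi}r(u)(1-1)_+=0$ — gives $\hlim(2,0)=-\tfrac12\int_0^2\dot\ell^2\,\d u$, while \eqref{e.r.ode} gives $\dot\ell^2=\tfrac2\pi(r-\tfrac\pi2)$ with $\int_0^2 r\,\d u=2\pi$ (substitute $r=\tfrac\pi2\sec^2\phi$), so $\hlim(2,0)=-1$, consistent with $\hm_\lambda(2,0)=-1$.

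\emph{On $\lens$} (the estimate highlighted in the introduction). Fix $(t,x)\in\lens$ with $t\in[\delta,2)$, say $x\ge 0$; every $(t,y)$ with $|y|\le\ell(t)$ is joined to $(2,0)$ by the geodesic $\tfrac{y}{\ell(t)}\ell$. Apply \eqref{e.FKformula.stot} at time $t$ and discard the mass outside a short interval $N\ni x$ of length $\eta$ inside $[-\ell(t),\ell(t)]$:
\[
 e^{-\lambda}=\Zm_\lambda(2,0)\ \ge\ \int_{N}\hk_\lambda(2-t,-y)\,\FKm_\lambda(t,y;2,0)\,\Zm_\lambda(t,y)\,\d y .
\]
For $y\in N$ combine $\tfrac1\lambda\log\hk_\lambda(2-t,-y)\ge-\tfrac{y^2}{2(2-t)}$ (the prefactor is $\ge 0$ once $\lambda\ge 4\pi$), the bound $\tfrac1\lambda\log\FKm_\lambda(t,y;2,0)\ge\hlim(2,0)-\hlim(t,y)+\tfrac{y^2}{2(2-t)}-o(1)$ from Proposition~\ref{p.lwbd} (with $o(1)$ uniform over geodesic-connected pairs), and $\Zm_\lambda(t,y)=e^{\lambda\hm_\lambda(t,y)}$; the two copies of $\tfrac{y^2}{2(2-t)}$ cancel. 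Crucially $\hlim(t,y)$ and $\hm_\lambda(t,y)$ now occur only through $\hm_\lambda(t,y)-\hlim(t,y)$, which I transfer from $y$ to $x$ using the spatial Hölder bound of Proposition~\ref{p.holder} (uniform for $\lambda\ge 1/c(\delta)$) and the Lipschitz continuity of $\hlim$ on $N$ (clear from \eqref{e.hlim.geodesic.0} after the substitution $y\mapsto y/\ell(t)$). Taking $\tfrac1\lambda\log$ and using $\hlim(2,0)=-1$ yields $\hm_\lambda(t,x)-\hlim(t,x)\le o(1)+c(\delta)\,\eta^{2/13-\delta}$, where $o(1)\to0$ as $\lambda\to\infty$ for fixed $\eta$; sending $\lambda\to\infty$ and then $\eta\to0$ gives the bound, uniformly over the part of $\lens\cap([\delta,2]\times[-\delta^{-1},\delta^{-1}])$ a fixed distance from $\{t=2\}$. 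The thin sliver near $(2,0)$, where $\ell(t)$ forces $\eta$ small, is handled instead via the exact equality $\hm_\lambda(2,0)=-1=\hlim(2,0)$, Proposition~\ref{p.holder}, and the continuity of $\hlim$.

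\emph{On $\lens^c$} one propagates the lens bound outward. For $(t,x)\notin\lens$ with $t\in[\delta,2]$, $|x|\le\delta^{-1}$, the geodesic from $(0,0)$ to $(t,x)$ leaves $\partial\lens$ at $(t^\ast,\pm\ell(t^\ast))$ with $t^\ast\ge t^\ast_{\min}(\delta)>0$ and is a straight segment on $[t^\ast,t]$ along which $\devlim\equiv 0$, so by \eqref{e.hlim.geodesic} $\hlim(t,x)=\hlim(t^\ast,\pm\ell(t^\ast))-\tfrac{(x\mp\ell(t^\ast))^2}{2(t-t^\ast)}$. Apply \eqref{e.FKformula.stot} at time $t^\ast$ and $\FKm_\lambda\le1$ to get $\Zm_\lambda(t,x)\le\int_\R\hk_\lambda(t-t^\ast,x-y)\,\Zm_\lambda(t^\ast,y)\,\d y$; insert the lens bound on $\{|y|\le\ell(t^\ast)\}$, the trivial bound $\Zm_\lambda(t^\ast,y)\le\hk_\lambda(t^\ast,y)$ (from \eqref{e.FKformula} and $\FKm_\lambda\le1$) to kill the far field, and the a priori estimate $\Zm_\lambda(t^\ast,y)\le e^{\lambda(\hlim(t^\ast,y)+E_\lambda(t^\ast))}$ on the remaining exterior range, where $E_\lambda(s):=\sup\{(\hm_\lambda(s,z)-\hlim(s,z))^+:(s,z)\notin\lens,\ |z|\le\delta^{-1}\}$. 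A Laplace-type estimate using the geodesic structure then produces $\hm_\lambda(t,x)\le\hlim(t,x)+E_\lambda(t^\ast)^+\!+o(1)$, i.e.\ a self-improving inequality for $\sup_{s\in[\delta,2]}E_\lambda(s)$; iterating it (the finitely many $o(1)$'s vanish as $\lambda\to\infty$) and using the base case $\hm_\lambda(s,z)-\hlim(s,z)\le o(1)+\big(\hlim(s,z)+\tfrac{z^2}{2s}\big)\to 0$ as $s\to 0$ (the trivial bound, since $\hlim(s,z)+\tfrac{z^2}{2s}\to 0$ uniformly in $|z|\le\delta^{-1}$) forces $\limsup_{\lambda\to\infty}E_\lambda\le 0$. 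Together with Corollary~\ref{c.main.lwbd} this also completes the proof of Theorem~\ref{t.main}.

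\emph{The main obstacle} is the estimate on $\lens$: Proposition~\ref{p.lwbd} and the hard constraint $\Zm_\lambda(2,0)=e^{-\lambda}$ must be used in concert so that the two uncontrolled quantities $\hlim(t,y)$ and $\hm_\lambda(t,y)$ appear only in the self-cancelling combination $\hm_\lambda-\hlim$, after which the spatial Hölder estimate permits localisation at a scale $\eta$ that does not depend on $\lambda$ — the exponent $2/13$ is immaterial, but the $\lambda$-uniformity of $\eta$ is essential, since one sends $\lambda\to\infty$ before $\eta\to0$. The remaining points (uniformity near $(2,0)$; the bookkeeping of the exterior bootstrap and its Laplace and tail estimates) are routine.
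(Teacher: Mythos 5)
Your treatment of the lens region is essentially the paper's own argument (anchor \eqref{e.FKformula.stot} at $(2,0)$, lower-bound the bridge factor by Proposition~\ref{p.lwbd}, cancel $\hm_\lambda(2,0)=\hlim(2,0)=-1$ and the Gaussian factors, then localize with Proposition~\ref{p.holder}), and that part is fine. The first genuine gap is the uniformity in time, in particular the ``sliver'' near $(2,0)$: your proposed patch --- the exact equality at $(2,0)$ together with Proposition~\ref{p.holder} and continuity of $\hlim$ --- cannot close it, because Proposition~\ref{p.holder} compares points at the \emph{same} time only, so knowing $\hm_\lambda(2,0)=-1$ says nothing about $\hm_\lambda(t,x)$ for $t<2$. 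Some temporal control of $\hm_\lambda$ is unavoidable here; the paper proves exactly such a statement (the one-sided continuity of Lemma~\ref{l.holder.t}, obtained from \eqref{e.FKformula.stot}, $\FKm_\lambda\le 1$, Proposition~\ref{p.holder} and the lower bound of Corollary~\ref{c.main.lwbd}) and uses it to pass from the fixed-time bound \eqref{e.upbd.lens.spatial} to a spacetime-uniform one. An alternative would be to compare $(t,x)$ with same-time points \emph{outside} the lens, but then the lens bound near $t=2$ rests on your exterior bound and you would have to untangle that circularity explicitly; as written, the sliver is not covered.

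For the exterior region your route is genuinely different from the paper's, and the two steps that carry the weight are asserted rather than proved. The paper decomposes at the backward hitting time of $\partial\lens$ and invokes the Brownian-bridge LDP, so it only ever needs $\Zm_\lambda$ \emph{on} the lens boundary, where the uniform lens bound applies. You instead apply \eqref{e.FKformula.stot} at the fixed tangent time $t^\ast$ and discard the potential there, which forces you to control $\Zm_\lambda(t^\ast,y)$ at exterior $y$ as well, hence the bootstrap with $E_\lambda$. First, the ``Laplace-type estimate'' needs the identity $\sup_y\big[\hlim(t^\ast,y)-\tfrac{(x-y)^2}{2(t-t^\ast)}\big]=\hlim(t,x)$; since $\devlim\le 0$, the zero-potential Hopf--Lax evolution dominates the true one, so this is an inequality in the dangerous direction and must be verified from the explicit structure of $\hlim$ (note, e.g., that $\hlim(t^\ast,\cdot)$ is a quadratic inside the lens which is \emph{convex} in $x$ for $t^\ast$ near $2$, and that $\hlim(t^\ast,\cdot)$ increases as one moves outward from $\partial\lens$ when $t^\ast>1$, so ``the geodesic structure'' is not a one-line justification). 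Second, the self-improving inequality $E_\lambda(t)\le E_\lambda(t^\ast)+o(1)$ does not obviously terminate: $t^\ast(t,x)\uparrow t$ as $x$ approaches $\partial\lens$, so without a stratification (points within distance $\e$ of the lens handled by Proposition~\ref{p.holder} from the lens bound, points farther out having $t-t^\ast\ge c(\e)$, and a count of the resulting finitely many iterations) the inequality is vacuous; moreover your base case is written with the sign of $\hlim(s,z)+\tfrac{z^2}{2s}$ reversed, and the claim that this quantity tends to $0$ uniformly as $s\to 0$ goes beyond \eqref{e.hlim.bd} and needs its own estimate. These points are where the proposal currently falls short; the paper's hitting-time decomposition avoids all of them.
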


We will prove this proposition separately within the lens region and outside of the lens region.
More precisely, the lens region is $ \lens := \{(t,x) : |x|\leq \ell(t) \} $.
We will prove the pointwise version of Proposition~\ref{p.upbd} and argue in Section~\ref{s.upbd.uniform} how to upgrade the pointwise result to a uniform result.

\subsection{Pointwise bound within $ \lens $}
\label{p.upbd.lens}
Fixing any $ (t_0,x_0) \in \lens $, we seek to prove 
\begin{align}
	\label{e.upbd.lens.ptwise}
	\limsup_{\lambda\to\infty} \hm_\lambda(t_0,x_0) \leq \hlim(t_0,x_0).
\end{align}
Given the H\"{o}lder continuity from Proposition~\ref{p.holder}, it suffices to consider $ (t_0,x_0) \in \lens^\circ $, which we assume hereafter.
To begin the proof, in \eqref{e.FKformula.stot}, set $ \dev=\devm_\lambda $, $ (t,x) = (2,0) $, and $ s=t_0 $ and write $ \Zfn_\lambda[\devm_\lambda] := e^{\lambda\hm_\lambda} $.
Restrict the integral to a small interval $ [-v+x_0,x_0+v] $.
Doing so makes the integral smaller, whereby 
\begin{align*}
	e^{\lambda\hm_\lambda(2,0)}
	\geq
	\int_{[-v+x_0,x_0+v]} \hk_\lambda(2-t_0,y) \FK_\lambda[\dev](t_0,y;2,0) \, e^{\lambda\hm_\lambda(t_0,y)} \, \d y.
\end{align*}
Given that $ (t_0,x_0) \in \lens^\circ $, we assume $ v $ is small enough so that each $ y $ in this integral lies in $ \lens $.
For such a $ y $, there exists a geodesic that connects $ (t_0,y) $ to $ (2,0) $, so by Proposition~\ref{p.lwbd}, $ \FK_\lambda[\dev](t_0,y;2,0) \geq $$ \exp(\lambda \hlim(2,0)) $ $ \exp( -\lambda\hlim(t_0,y)) $ $ \exp(\frac{\lambda y^2}{2(2-t_0)}) $ $\exp(-o(\lambda)) $.
Note that $ \hlim(2,0) $ and $ \hm_\lambda(2,0) $ are both equal to $ -1 $, so we can cancel these two factors from both sides.
The factor $ \exp(\frac{\lambda y^2}{2(2-t_0)}) $ can be combined with $ \hk_\lambda(2-t_0,y) $.
Altogether we have
\begin{align*}
	1
	\geq
	\sqrt{\lambda/(2\pi(2-t_0))}
	\int_{[-v+x_0,x_0+v]} e^{\lambda(\hm_\lambda(t_0,y)-\hlim(t_0,y) - o(1))} \, \d y.
\end{align*}
By Proposition~\ref{p.holder} the function $ \hm_\lambda(t_0,y) $ is H\"{o}lder in $ y $, and it is not hard to verify from \eqref{e.hlim} that $ \hlim(t,x) $ is Lipschitz except near $ t=0 $.
Hence the integral is bounded below by $ 2v \exp(\lambda(\hm_\lambda(t_0,x_0)-\hlim(t_0,x_0) - o(1))) \exp(-\lambda c v^{1/13}) $.
Apply $ \frac{1}{\lambda}\log(\Cdot) $ to both sides, take $ \lambda\to\infty $, and take $ v\to 0 $ later.
Doing so gives the desired result~\eqref{e.upbd.lens.ptwise}.

\subsection{From a pointwise bound to a uniform bound}
\label{s.upbd.uniform}
Here we explain how to upgrade a pointwise lower bound into a uniform one.
First, thanks to the H\"{o}lder continuity from Proposition~\ref{p.holder}, we can upgrade \eqref{e.upbd.lens.ptwise} into
\begin{align*}
	&\limsup_{\lambda\to\infty} \{ \hm_\lambda(t_0,x) - \hlim(t_0,x) : |x| \leq \ell(t_0)\} \leq 0,
	&&
	\text{for any fixed } t_0\in(0,2].&
\end{align*}
In fact, the H\"{o}lder continuity allows us to go slightly beyond the lens:
\begin{align}
	\label{e.upbd.lens.spatial}
	&\limsup_{\lambda\to\infty} \big\{ \hm_\lambda(t_0,x) - \hlim(t_0,x) \, : \, |x| \leq \ell(t_0) + v \big\} \leq c v^{1/13},
	&&
	\text{for any fixed } t_0\in(0,2] \text{ and } v>0.&
\end{align}
To further upgrade \eqref{e.upbd.lens.spatial} into a spacetime-uniform bound, we will develop a one-sided time continuity estimate in Lemma~\ref{l.holder.t}.
Combining Lemma~\ref{l.holder.t} and \eqref{e.upbd.lens.spatial} gives the desired uniform bound:
\begin{align}
	\label{e.upbd.lens.uniform}
	\limsup_{\lambda\to\infty} \big\{ \hm_\lambda(t,x) - \hlim(t,x) \, : \, (t,x)\in \lens\cap([\delta,2]\times[-\delta^{-1},\delta^{-1}]) \big\} \leq 0.
\end{align}

\begin{lem}
\label{l.holder.t}
For any $ \delta>0 $,
\begin{align*}
	\liminf_{(\lambda,u)\to(\infty,0)} \, \inf\big\{ \hm_\lambda(s,x) - \hm_\lambda(t,x) : s<t\in[\delta,2], \, |t-s|\leq u, \, |x|\leq \delta^{-1} \big\} \geq 0.
\end{align*}
\end{lem}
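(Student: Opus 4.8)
The plan is to compare $\hm_\lambda(s,x)$ and $\hm_\lambda(t,x)$ through the Feynman--Kac propagation formula~\eqref{e.FKformula.stot}, which expresses $\Zm_\lambda(t,x)$ as a spatial convolution of $\Zm_\lambda(s,\cdot)$ against the scaled heat kernel weighted by the Feynman--Kac factor $\FK_\lambda[\devm_\lambda](s,y;t,x)$. Since $\devm_\lambda\leq 0$ (Appendix~\ref{s.a.devm<=0}), this factor is at most $1$, so
\begin{align*}
	\Zm_\lambda(t,x) \leq \int_\R \hk_\lambda(t-s,x-y)\,\Zm_\lambda(s,y)\,\d y.
\end{align*}
Taking $\frac{1}{\lambda}\log$ of both sides, the task reduces to an upper bound on the log of a Gaussian average of $e^{\lambda\hm_\lambda(s,\cdot)}$ over a window of width $O(\sqrt{(t-s)/\lambda})$ around $x$; first I would restrict the $y$-integral to $\{|y-x|\leq |t-s|^{b}\}$ for a small exponent $b\in(0,1)$, paying only a factor $\exp(-c\lambda|t-s|^{2b-1}+\ldots)$ from the Gaussian tail (which is negligible as long as $2b-1<0$ and $|t-s|$ is small, so we need $b<\tfrac12$), and on the complementary event use $\Zm_\lambda(s,y)\leq \hk_\lambda(s,y)\leq \sqrt{\lambda/(2\pi s)}$ together with the lower bound $\Zm_\lambda(s,x)\geq e^{-\lambda c(\delta)}$ (from Proposition~\ref{p.lwbd} and \eqref{e.hlim.bd}) to see that the tail contribution is subdominant.

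On the restricted window, I would use the spatial Hölder continuity of Proposition~\ref{p.holder}: for $|y-x|\leq|t-s|^b$ and $(s,x),(s,y)\in[\delta/2,2]\times[-2\delta^{-1},2\delta^{-1}]$ we have $\hm_\lambda(s,y)\leq \hm_\lambda(s,x)+c(\delta)|t-s|^{b(2/13-\delta)}$ for all large $\lambda$. Hence
\begin{align*}
	\hm_\lambda(t,x)
	&\leq
	\hm_\lambda(s,x) + c(\delta)|t-s|^{b(2/13-\delta)}
	+ \frac{1}{\lambda}\log\!\int_{|y-x|\leq|t-s|^b}\hk_\lambda(t-s,x-y)\,\d y + (\text{neg.})
\\
	&\leq
	\hm_\lambda(s,x) + c(\delta)|t-s|^{b(2/13-\delta)} + (\text{neg.}),
\end{align*}
since the heat kernel integrates to at most $1$. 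Rearranging gives $\hm_\lambda(s,x) - \hm_\lambda(t,x) \geq -c(\delta)|t-s|^{b(2/13-\delta)} - (\text{neg.})$, uniformly over $s<t\in[\delta,2]$ with $|t-s|\leq u$ and $|x|\leq\delta^{-1}$; letting $(\lambda,u)\to(\infty,0)$ yields the claim. Any fixed choice such as $b=\tfrac14$ works; one only needs $b<\tfrac12$ so the Gaussian-tail cost vanishes and $b>0$ so the Hölder error vanishes.

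The main obstacle is bookkeeping the error terms so that all of them genuinely vanish in the iterated limit: the Gaussian-tail factor $\exp(-c\lambda|t-s|^{2b-1})$ actually \emph{grows} in $\lambda$ for fixed $t-s$, so it must be absorbed against the genuine lower bound $\Zm_\lambda(s,x)\geq e^{-\lambda c(\delta)}$, which forces the window exponent $b$ and the "small $|t-s|$" threshold to be chosen (depending on $\delta$) before sending $\lambda\to\infty$. The other delicate point is purely a domain issue: the Hölder estimate requires $(s,y)$ to stay in a fixed compact set strictly inside $(0,2]\times\R$, so one restricts attention to $t-s$ small enough (compared to $\delta$) that $s\geq\delta/2$ and $|y|\leq 2\delta^{-1}$ on the window, which is harmless since we take $u\to0$. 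Everything else is a routine application of $\hk_\lambda\geq 0$, $\int\hk_\lambda=1$, and Gaussian tail bounds.
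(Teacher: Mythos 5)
Your argument is correct and is essentially the paper's own proof: both use \eqref{e.FKformula.stot} with $\FKm_\lambda\leq 1$ (from $\devm_\lambda\leq 0$), split the resulting Gaussian average of $e^{\lambda\hm_\lambda(s,\cdot)}$ into a small window around $x$ handled by Proposition~\ref{p.holder} and a tail handled by $\Zm_\lambda\leq\hk_\lambda\leq c(\delta)\sqrt{\lambda}$ together with the lower bound $\hm_\lambda\geq -c(\delta)$ from Corollary~\ref{c.main.lwbd} and \eqref{e.hlim.bd}; the only cosmetic difference is your window width $|t-s|^b$ versus the paper's fixed $v$ sent to $0$ at the end. One small slip in your closing discussion: for $b<\tfrac12$ and $|t-s|\leq u<1$ the tail factor $\exp(-c\lambda|t-s|^{2b-1})$ \emph{decays} in $\lambda$ rather than grows; the real point, which your argument does handle correctly, is that it must beat the possibly exponentially small main term $e^{-\lambda c(\delta)}$, which is why the threshold on $u$ depends on $\delta$.
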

\begin{proof}
In \eqref{e.FKformula.stot}, set $ \dev=\devm_\lambda $ and bound $ \FKm_\lambda(s,y;t,x) \leq 1 $, which holds because $ \devm_\lambda\leq 0 $ (Appendix~\ref{s.a.devm<=0}).
Recognize the resulting integral as $ \E[ \Zm_\lambda(s,x+Y) ] $, where $ Y $ is a zero-mean Gaussian with variance $ (t-s)/\lambda $.
Divide this expectation into $ \{|Y|\leq v\} $ and $ \{|Y| > v\} $.
For the former, use Proposition~\ref{p.holder} to bound expectation from above by $ \exp(\lambda (\hm_\lambda(s,t) + c v^{1/13})) $.
For the latter, use $ \Zm_\lambda(s,y) = \hk_\lambda(s,y) \FKm_\lambda(s,y;t,x) \leq \hk_\lambda(s,y) \leq c(\delta)\sqrt{\lambda} $ to bound the expectation from above by $ c(\delta)\sqrt{\lambda} \exp(-\lambda v^2/(3(t-s))) $.
Altogether, we have
\begin{align}
	\label{e.l.holder.t.1}
	e^{\lambda \hm_\lambda(t,x)}
	\leq
	e^{\lambda (\hm_\lambda(s,x) + c v^{1/13})}
	+
	c(\delta) \sqrt{\lambda} e^{-\lambda \frac{v^2}{3(t-s)}}.
\end{align}
Recall the lower bound of $ \hm_\lambda $ from Corollary~\ref{c.main.lwbd}, and note that $ \hlim $ is bounded on $ [\delta,2]\times[-\delta^{-1},\delta^{-1}] $; see \eqref{e.hlim.bd}.
Given this bound, we see that for fixed $ \delta,v>0 $, the first term on the right side of \eqref{e.l.holder.t.1} dominate the entire right side as $ (\lambda,u)\to(\infty,0) $, uniformly over $ s<t\in[\delta,2] $ and $ |x|\leq\delta^{-1} $ with $ |t-s| \leq u $.
This being the case, apply $ \log(\Cdot) $ to both sides, take the relevant infimum, take $ \liminf_{(\lambda,u)\to(\infty,0)} $, and finally take $ v\to 0 $.
This completes the proof.
\end{proof}

\subsection{Bound within $ \lens^\mathrm{c} $}
To complete the proof of Proposition~\ref{p.lwbd}, we establish the bound in $ \lens^\mathrm{c} $.
We will only carry out the proof of the pointwise bound: The same argument in Section~\ref{s.upbd.uniform} applies to upgrade the pointwise bound to the uniform bound.
Fix $ (t_0,x_0) \in \lens^\mathrm{c} $ with $ t_0\in(0,2] $.
Assume $ x_0 > \ell(t_0) $ to simplify notation.

The first step is to decompose the Feynman--Kac formula according to the first hitting time at the boundary of $ \lens $.
Express $ \Zm_\lambda(t_0,x_0) $ by \eqref{e.FKformula}.
The relevant Brownian bridge $ W_\lambda \sim \mathrm{BB}((t_0,x_0)\to (0,0)) $ goes backward in time.
Let $ \tau := \sup\{ u\in[0,t_0] : W_\lambda(u) \leq \ell(u) \} $ be the first (going backward in time) hitting time  at the boundary of $ \lens $.
In the Feynman--Kac expectation (namely \eqref{e.FKexp} with $ \dev=\devm_\lambda $ and $ (s,y;t,x) = (0,0;t_0,x_0) $), decompose the integral into $ \int_0^\tau $ and $ \int_\tau^{t_0} $ and bound the contribution of the latter using $ \devm_\lambda \leq 0 $.
For the former, note that the law of $ \{W_\lambda(u)\}_{u\in[0,\tau]} $ conditioned on $ \tau=s $ is $ \mathrm{BB}_\lambda((s,\ell(s))\to (0,0)) $.
We recognize the resulting expectation as $ \E[ ( \Zm_\lambda/\hk_\lambda)(\tau,\ell(\tau)) ] $.
Hence,
\begin{align}
	\label{e.upbd.lensc.FK}
	e^{\lambda\hm_\lambda(t_0,x_0)}
	=:
	\Zm_\lambda(t_0,x_0)
	\leq
	\hk_\lambda(t_0,x_0) \, \E\big[ \big( \Zm_\lambda/\hk_\lambda \big)(\tau,\ell(\tau)) \big].
\end{align}

To control the right side of \eqref{e.upbd.lensc.FK}, we discretize in time.
Let $ s_i := \frac{i}{n} t_0 $, and decompose the expectation in \eqref{e.upbd.lensc.FK} into $ n $ expectations according to $ \tau\in[0,s_1) $, $ \tau\in[s_1,s_2) $, $ \ldots $.
For the first one, we use $ (\Zm_\lambda/\hk_\lambda)(t,x) = \FKm_\lambda(0,0;t,x) \leq 1 $.
For the remaining $ (n-1) $ ones, use \eqref{e.upbd.lens.uniform} to bound $ \Zm_\lambda(u,\ell(u)) $ by $ \exp(-\lambda(\hlim(u,\ell(u))+o(1))) $, where the $ o(1) $ depends only on $ t_0/n $.
Given this bound, we further bound $ ( \Zm_\lambda/\hk_\lambda)(\tau,\ell(\tau)) $ by $ c(n,t_0) \max_{u\in[s_i,s_{i+1})} \exp(\lambda(\hlim(u,\ell(u))+\frac{\ell(u)^2}{2u}+o(1))) $.
Altogether, we have
\begin{align}
	\label{e.upbd.lensc.discrete}
	e^{\lambda\hm_\lambda(t_0,x_0)}
	\leq
	c(n,t_0) e^{-\frac{\lambda x_0^2}{2t_0}}
	\Big( 
		\P\big[\tau\in[0,s_1)\big] + \sum_{i=1}^{n-1} \max_{u\in[s_i,s_{i+1})}\Big\{ e^{ \lambda (\hlim(u,\ell(u))+ \frac{\ell(u)^2}{2u} + o(1) ) } \Big\} \P\big[\tau\in[s_i,s_{i+1})\big]
	\Big).
\end{align}

We next involve the LDP of $ W_\lambda $ to control $ \P[\tau\in[s_i,s_{i+1})] $.
The Brownian bridge enjoys an LDP on $ C((t_0,x_0)\to(0,0)) $, the space of continuous paths connecting $ (t_0,x_0) $ and $ (0,0) $.
The rate function is $ I_\mathrm{BB}(\Path) := \frac{1}{2} \int_0^{t_0} \dot{\Path}^2 \, \d u - x_0^2/(2t_0) $, where the integral is interpreted as $ +\infty $ when $ \Path \notin H^1((t_0,x_0)\to(0,0)) $.
Now, consider the set $ \calT_i := \{ \Path: \Path[s_i,s_{i+1}] \cap (\partial \lens) \neq \emptyset, \, \Path[s_{i+1},t_0] \cap \lens^\circ = \emptyset \} $.
Indeed, $ \calT_i $ is a closed set in $ C((t_0,x_0)\to(0,0)) $ and $ \{\tau\in[s_i,s_{i+1})\} \subset \{ W_\lambda \in \calT_i \} $.
By the LDP of $ W_\lambda $,
\begin{align}
	\label{e.upbd.lensc.ldp}
	\limsup_{\lambda\to\infty} \frac{1}{\lambda} \log \P\big[\tau\in[s_i,s_{i+1})\big]
	\leq
	-\inf_{\Path\in\calT_i}  \frac{1}{2} \int_0^{t_0} \dot{\Path}^2 \, \d u  + \frac{x_0^2}{2t_0}.
\end{align}
We would like to modify the right side of \eqref{e.upbd.lensc.ldp}.
Let $ t_\Path := \sup\{ u : \Path(u) \leq \ell(u) \} $ be the first (going backward in time) hitting time of $ \Path \in H^1((t_0,x_0)\to(0,0)) $ and consider the set $ \calS_i := \{ \Path : t_\Path \in [s_i,s_{i+1}] \} $.
Even though $ \calS_i \subsetneq \calT_i $, it is not hard to verify that the infimum in \eqref{e.upbd.lensc.ldp} does not change if we replace $ \calT_i $ with $ \calS_i $.
Hence
\begin{align}
	\label{e.upbd.lensc.ldp.}
	\limsup_{\lambda\to\infty} \frac{1}{\lambda} \log \P\big[\tau\in[s_i,s_{i+1})\big]
	\leq
	-\inf\Big\{  \frac{1}{2} \int_0^{t_0} \dot{\Path}^2 \, \d u \, : \Path \in H^1((t_0,x_0)\to(0,0)), t_\Path \in [s_i,s_{i+1}] \Big\}  + \frac{x_0^2}{2t_0}.
\end{align}

We now combine \eqref{e.upbd.lensc.discrete} and \eqref{e.upbd.lensc.ldp.} to complete the proof.
Apply $ \limsup_{\lambda\to\infty}\frac{1}{\lambda}\log(\Cdot) $ to both sides of \eqref{e.upbd.lensc.discrete} with the aid of \eqref{e.upbd.lensc.ldp.}, and take $ n\to\infty $ with the aid of the continuity of $ \hlim(u,\ell(u))|_{u\in[0,t_0]} $.
We arrive at 
\begin{align}
	\limsup_{\lambda\to\infty} \hm_\lambda(t_0,x_0)
	\leq
	\sup\Big\{  -\int_0^{t_0} \frac{1}{2} \dot{\Path}^2 \, \d u +\frac{\ell(t_\Path)^2}{2t_\Path} + \hlim(t_\Path,\ell(t_\Path)) \, : \, \Path \in H^1((t_0,x_0)\to(0,0)) \Big\}.
\end{align}
Bound from above the first two terms within the supremum together by $ -\int_{t_\Path}^{t_0} \frac{1}{2} \dot{\Path}^2 \, \d u $.
To see why such a bound holds, write the first term as $ -\int_{t_\Path}^{t_0} - \int_0^{t_\Path} $, and note that the second term is $ \int_0^{t_\Path} \frac12 \dot{\Path}_\mathrm{line}^2 \, \d u $, where ${\Path}_\mathrm{line}$ is the linear path that connects $ (t_\Path,\ell(\Path)) $ and $ (0,0) $.
A linear path minimizes such an integral among all paths connecting the same points, so the bound follows.
Next, use \eqref{e.hlim.geodesic} to express $ \hlim(t_\Path,\ell(t_\Path)) $ as $ \int_{0}^{t_\gamma} (-\frac12 \dot\ell(u)^2 + \devlim(u,\ell(u)) )\, \d u $.
At this stage, the relevant paths are $ \ell(u)|_{u\in[0,t_\Path]} $ and $ \Path(u)|_{u\in[t_\Path,t_0]} $.
Concatenating these paths gives a path $ \perturbation\in H^1((t_0,x_0)\to(0,0)) $.
Hence,
\begin{align*}
	\limsup_{\lambda\to\infty} \hm_\lambda(t_0,x_0)
	\leq
	\sup\Big\{ \int_0^{t_0} \Big( -\frac{1}{2} \dot{ \perturbation}^2 + \devlim(u, \perturbation(u)) \Big) \, \d u  \, : \,  \perturbation \in H^1((t_0,x_0)\to(0,0)) \Big\}.
\end{align*}
The right side is exactly $ \hlim(t_0,x_0) $; see \eqref{e.hlim}.
This completes the proof.

\appendix

\section{Technical tools}
\label{s.a.tools}

\subsection{Some bounds}
Here are some useful bounds.
It is not hard to derive them from \eqref{e.r.ode}, \eqref{e.r.bound}, \eqref{e.devlim}, and \eqref{e.hlim.geodesic.0}.
\begin{align}
	\label{e.devlim.bd}
	\sup_{x\in\R} |\devlim(t,x)| &\leq c\, (t^{-2/3} + (2-t)^{-2/3}),
\\
	\label{e.elldot.bd}
	|\dot{\ell}(t)| &\leq c\, (t^{-1/3} + (2-t)^{-1/3}),
\\
	\label{e.hlim.bd}
	\sup_{(t,x)\in(0,2]\times\R} \big|\hlim(t,x) + \tfrac{x^2}{2t} \big| &<\infty,
\end{align}
where $ c \in(0,\infty) $ denotes a universal constant.

\subsection{The sign of $ \devm_\lambda $}
\label{s.a.devm<=0}
Next, we prove that $ \devm_\lambda \leq 0 $ a.e.\ on $ [0,2]\times\R $, for all $ e^{-\lambda} < p_\lambda(2,0) $.
Let us argue by contradiction: Consider $ (\devm_\lambda)_+ := \min\{\devm_\lambda,0\} $ and assume the function is not 0 a.e.
Set $ \dev^1 := \devm_\lambda - (\devm_\lambda)_+ $.
By the monotonicity of $ \FK_\lambda[\dev] $ in $ \dev $, we have $ e^{-\lambda} = \FK_\lambda[\devm_\lambda](2,0) \geq \FK_\lambda[\dev^1](2,0) $.
Also, we have $ \FK_\lambda[0](2,0) = \hk_\lambda(2,0) > e^{-\lambda} $.
It is not hard to verify that $ \FK_\lambda[a\dev^1](2,0) $ varies continuously in $ a $.
Hence there exists $ a_1\in(0,1] $ such that $ \FK_\lambda[a_1\dev^1](2,0) = e^{-\lambda} $.
However, we have $ \norm{\devm_\lambda}_2 > \norm{\dev^1}_2 \geq \norm{a_1\dev^1}_2 $, which contradicts with the assumption that $ \devm_\lambda $ is the minimizer.

\subsection{The Feynman--Kac formula}
\label{s.a.FKformula}
Finally, let us provide a proof of the Feynman--Kac formula~\eqref{e.FKformula}.
We consider $ \lambda=1 $ only, and the case for $ \lambda>0 $ follows by scaling.
Let $ \hk(t,x) $ denote the standard heat kernel.
First, by writing \eqref{e.he} in Duhamel's form and iterating the result, we have
\begin{align}
	\label{e.he.expansion}
	\Zfn[\dev](t,x) = \hk(t,x) + \sum_{n=1}^\infty I_n(t,x),
\end{align}
where
\begin{align}
	\label{e.she.In}
	I_n(t,x) 
	:= 
	\int_{0<u_1<\ldots<u_n<t} \Big( \int_{\R^n} 
		\hk(u_1,z_1)\prod_{i=1}^n \Big( \hk(u_{i+1}-u_i,z_{i+1}-z_i) \dev(u_{i},z_i) \d z_i \Big)
	\Big) \d u_1\cdots \d u_n,
\end{align}
with the convention $ u_{n+1}:=t $ and $ z_{n+1} := x $.
To ensure that \eqref{e.he.expansion} gives a convergence series, we derive a bound on $ I_n(t,x) $.
Apply the Cauchy--Schwarz inequality in \eqref{e.she.In} over the integrals to ``separate the $ \hk $ and $ \dev $'' to get
\begin{align}
	\label{e.cs.1}
	|I_n(t,x)|^2
	\leq&
	\int_{0<s_1<\ldots<s_n<t} \Big( \int_{\R^n} 
		\prod_{i=1}^n \Big( \dev(s_i,y_i)^2 \d y_i \Big)
	\Big) \d s_1\cdots \d s_n
\\
	\label{e.cs.2}
	&\cdot
	\int_{0<s_1<\ldots<s_n<t} \Big( \int_{\R^n} 
		\hk^2(s_1,y_1) \prod_{i=1}^n \Big( \hk^2(s_{i+1}-s_i,y_{i+1}-y_i) \d y_i \Big)
	\Big) \d s_1\cdots \d s_n.
\end{align}
The integral in \eqref{e.cs.1} is equal to $ \frac{1}{n!}\norm{\dev}^2_{L^2([0,t]\times\R)} $; the integral in \eqref{e.cs.2} is 
$  
	2^{-n} t^{n/2-1}\pi^{-1/2} e^{-x^2/t} / \Gamma(\frac{n+1}{2}).
$
Therefore,
\begin{align}
	\label{e.Inbd}
	|I_n(t,x)|
	\leq
	\pi^{1/4} \, (n! \,\Gamma(\tfrac{n+1}{2}))^{-1/2}\,
	\big( 2^{-1/2}t^{1/4}\norm{\dev}_{L^2([0,t]\times\R)} \big)^n
	\cdot
	\hk(t,x).
\end{align}
Next, recall that the Brownian Bridge $ W\sim\mathrm{BB}_1((t,x)\to(0,0)) $ has the following finite dimensional distributions; see~\cite[6.11~Problem]{karatzas12} for example.
\begin{align*}
	\P\Big[W(u_1)\in A_1, W(u_2)\in A_2, \ldots,W(u_n) \in A_n\Big]
	=
	\frac{ 1 }{ \hk(t,x) } \displaystyle \int_{\prod_{i=1}^n A_i} \hk(u_1,z_1)\prod_{i=1}^n \Big( \hk(u_{i+1}-u_i,z_{i+1}-z_i) \d z_i \Big),
\end{align*}
with the convention $ u_0:=0 $, $ u_{n+1}:=t $, $ z_0:=0 $, and $ z_{n+1}:=x $.
We hence recognize $ I_n(t,x) $ as an expectation over the Brownian bridge:
\begin{align}
	\label{e.matching0}
	I_n(t,x)
	&=
	\int_{0<u_1<\ldots<u_n<t} \E\Big[ \Big( \prod_{i=1}^n \dev(u_i,W(u_i)) \Big) \Big] \cdot \hk(t,x) \, \d u_1 \cdots \d u_n
\\
	\label{e.matching1}
	&=
	\E\Big[\int_{0<u_1<\ldots<u_n<t}  \Big( \prod_{i=1}^n \dev(u_i,W(u_i)) \Big)\, \d u_1 \cdots \d u_n \Big] \cdot \hk(t,x).
\end{align}
The exchange of the sum and integral in \eqref{e.matching0}--\eqref{e.matching1} is justified: If we replace $ \dev $ with $ |\dev| $ in \eqref{e.matching0}--\eqref{e.matching1}, the result is bounded by the right hand side of \eqref{e.Inbd}; hence Fubini's theorem applies.
Next, the integral in \eqref{e.matching1} can be written as
\begin{align}
	\label{e.matching2}
	\frac{1}{n!} \int_{0}^{t}\cdots \int_{0}^t \Big( \prod_{i=1}^n \dev(u'_i,W(u'_i)) \Big)\, \d u'_1 \cdots \d u'_n
	=
	\frac{1}{n!} \Big( \int_{0}^{t}\dev(u',W(u'))\, \d u' \Big)^n.
\end{align}
This is because $ \prod_{i=1}^n \dev(u'_i,W(u'_i)) $ is invariant under reordering of the $ u'_i $.
Now, combine~\eqref{e.matching0}, \eqref{e.matching1}, and \eqref{e.matching2}; sum the result over $ n \geq 1 $; add $ \hk(t,x) $ to the result; combine the result with \eqref{e.he.expansion}.
We have
\begin{align*}
	\Zfn[\dev](t,x)
	=
	\sum_{n=0}^\infty \E\Big[ \frac{1}{n!} \Big( \int_{0}^{t}\dev(u,W(u)) \, \d u \Big)^n \Big] \cdot \hk(t,x)
	=
	\E\Big[ \exp\Big(\int_0^t \dev(u,W(u))\,\d u \Big) \Big] \cdot \hk(t,x).
\end{align*}
The right side is exactly $ \FK_1[\dev](t,x) $.

\bibliographystyle{alphaabbr}
\bibliography{wnt}

\newcommand{\etalchar}[1]{$^{#1}$}
\begin{thebibliography}{HLDM{\etalchar{+}}18}

\bibitem[ALM19]{asida19}
T.~Asida, E.~Livne, and B.~Meerson.
\newblock Large fluctuations of a {K}ardar{--P}arisi{--Z}hang interface on a
  half line: The height statistics at a shifted point.
\newblock {\em Phys Rev E}, 99(4):042132, 2019.

\bibitem[BSM22]{bettelheim22}
E.~Bettelheim, N.~R. Smith, and B.~Meerson.
\newblock Inverse scattering method solves the problem of full statistics of
  nonstationary heat transfer in the {K}ipnis{--M}archioro{--P}resutti model.
\newblock {\em Phys Rev Lett}, 128(13):130602, 2022.

\bibitem[Car10]{cardaliaguet10}
P.~Cardaliaguet.
\newblock Notes on mean field games.
\newblock Technical report, 2010.

\bibitem[CC22]{cafasso22}
M.~Cafasso and T.~Claeys.
\newblock A {R}iemann{--H}ilbert approach to the lower tail of the
  {K}ardar{--P}arisi{--Z}hang equation.
\newblock {\em Comm Pure Appl Math}, 75(3):493--540, 2022.

\bibitem[CG20a]{corwin20general}
I.~Corwin and P.~Ghosal.
\newblock {KPZ} equation tails for general initial data.
\newblock {\em Electron J Probab}, 25, 2020.

\bibitem[CG20b]{corwin20lower}
I.~Corwin and P.~Ghosal.
\newblock Lower tail of the {KPZ} equation.
\newblock {\em Duke Math J}, 169(7):1329--1395, 2020.

\bibitem[CGK{\etalchar{+}}18]{corwin18}
I.~Corwin, P.~Ghosal, A.~Krajenbrink, P.~Le~Doussal, and L.-C. Tsai.
\newblock {C}oulomb-gas electrostatics controls large fluctuations of the
  {K}ardar{--P}arisi{--Z}hang equation.
\newblock {\em Phys Rev Lett}, 121(6):060201, 2018.

\bibitem[DT21]{das21}
S.~Das and L.-C. Tsai.
\newblock Fractional moments of the stochastic heat equation.
\newblock {\em Annales de l'Institut Henri Poincar{\'e}, Probabilit{\'e}s et
  Statistiques}, 57(2):778--799, 2021.

\bibitem[GH22]{ganguly22}
S.~Ganguly and M.~Hegde.
\newblock Sharp upper tail estimates and limit shapes for the {KPZ} equation
  via the tangent method.
\newblock {\em arXiv:2208.08922}, 2022.

\bibitem[GL22]{ghosal20}
P.~Ghosal and Y.~Lin.
\newblock Lyapunov exponents of the {SHE} for general initial data.
\newblock {\em To appear in Annales de l'Institut Henri Poincar{\'e},
  Probabilit{\'e}s et Statistiques. arXiv:2007.06505 (2020)}, 2022+.

\bibitem[GLL11]{gueant11}
O.~Gu{\'e}ant, J.-M. Lasry, and P.-L. Lions.
\newblock Mean field games and applications.
\newblock In {\em Paris-Princeton lectures on mathematical finance 2010}, pages
  205--266. Springer, 2011.

\bibitem[GLLT21]{gaudreaulamarre21}
P.~Y. Gaudreau~Lamarre, Y.~Lin, and L.-C. Tsai.
\newblock {KPZ} equation with a small noise, deep upper tail and limit shape.
\newblock {\em arXiv:2106.13313}, 2021.

\bibitem[HKLD20]{hartmann20}
A.~K. Hartmann, A.~Krajenbrink, and P.~Le~Doussal.
\newblock Probing large deviations of the {K}ardar{--P}arisi{--Z}hang equation
  at short times with an importance sampling of directed polymers in random
  media.
\newblock {\em Phys Rev E}, 101(1):012134, 2020.

\bibitem[HLDM{\etalchar{+}}18]{hartmann18}
A.~K. Hartmann, P.~Le~Doussal, S.~N. Majumdar, A.~Rosso, and G.~Schehr.
\newblock High-precision simulation of the height distribution for the {KPZ}
  equation.
\newblock {\em EPL (Europhysics Letters)}, 121(6):67004, 2018.

\bibitem[HMS19]{hartmann19}
A.~K. Hartmann, B.~Meerson, and P.~Sasorov.
\newblock Optimal paths of nonequilibrium stochastic fields: The
  {K}ardar{-P}arisi{-Z}hang interface as a test case.
\newblock {\em Physical Review Research}, 1(3):032043, 2019.

\bibitem[HMS21]{hartmann21}
A.~K. Hartmann, B.~Meerson, and P.~Sasorov.
\newblock Observing symmetry-broken optimal paths of the stationary
  {K}ardar{--P}arisi{--Z}hang interface via a large-deviation sampling of
  directed polymers in random media.
\newblock {\em Phys Rev E}, 104(5):054125, 2021.

\bibitem[JKM16]{janas16}
M.~Janas, A.~Kamenev, and B.~Meerson.
\newblock Dynamical phase transition in large-deviation statistics of the
  {K}ardar{--P}arisi{--Z}hang equation.
\newblock {\em Phys Rev E}, 94(3):032133, 2016.

\bibitem[Kim21]{kim21}
Y.~H. Kim.
\newblock The lower tail of the half-space kpz equation.
\newblock {\em Stochastic Processes and their Applications}, 142:365--406,
  2021.

\bibitem[KK07]{kolokolov07}
I.~Kolokolov and S.~Korshunov.
\newblock Optimal fluctuation approach to a directed polymer in a random
  medium.
\newblock {\em Phys Rev B}, 75(14):140201, 2007.

\bibitem[KK08]{kolokolov08}
I.~Kolokolov and S.~Korshunov.
\newblock Universal and nonuniversal tails of distribution functions in the
  directed polymer and {K}ardar{--P}arisi{--Z}hang problems.
\newblock {\em Phys Rev B}, 78(2):024206, 2008.

\bibitem[KK09]{kolokolov09}
I.~Kolokolov and S.~Korshunov.
\newblock Explicit solution of the optimal fluctuation problem for an elastic
  string in a random medium.
\newblock {\em Phys Rev E}, 80(3):031107, 2009.

\bibitem[KLD17]{krajenbrink17short}
A.~Krajenbrink and P.~Le~Doussal.
\newblock Exact short-time height distribution in the one-dimensional
  {K}ardar{--P}arisi{--Z}hang equation with {B}rownian initial condition.
\newblock {\em Phys Rev E}, 96(2):020102, 2017.

\bibitem[KLD18a]{krajenbrink18half}
A.~Krajenbrink and P.~Le~Doussal.
\newblock Large fluctuations of the {KPZ} equation in a half-space.
\newblock {\em SciPost Phys}, 5:032, 2018.

\bibitem[KLD18b]{krajenbrink18simple}
A.~Krajenbrink and P.~Le~Doussal.
\newblock Simple derivation of the {$(-\lambda H)^{5/2}$} tail for the 1{D}
  {KPZ} equation.
\newblock {\em J Stat Mech Theory Exp}, 2018(6):063210, 2018.

\bibitem[KLD19]{krajenbrink19}
A.~Krajenbrink and P.~Le~Doussal.
\newblock Linear statistics and pushed {C}oulomb gas at the edge of
  $\beta$-random matrices: Four paths to large deviations.
\newblock {\em EPL (Europhysics Letters)}, 125(2):20009, 2019.

\bibitem[KLD21]{krajenbrink21}
A.~Krajenbrink and P.~Le~Doussal.
\newblock Inverse scattering of the {Z}akharov{--S}habat system solves the weak
  noise theory of the {K}ardar{--P}arisi{--Z}hang equation.
\newblock {\em Phys Rev Lett}, 127(6):064101, 2021.

\bibitem[KLD22a]{krajenbrink22}
A.~Krajenbrink and P.~Le~Doussal.
\newblock The crossover from the {M}acroscopic {F}luctuation {T}heory to the
  {K}ardar{--P}arisi{--Z}hang equation controls the large deviations beyond
  {E}instein's diffusion.
\newblock {\em arXiv:2204.04720}, 2022.

\bibitem[KLD22b]{krajenbrink21flat}
A.~Krajenbrink and P.~Le~Doussal.
\newblock Inverse scattering solution of the weak noise theory of the
  {K}ardar{--P}arisi{--Z}hang equation with flat and {B}rownian initial
  conditions.
\newblock {\em Phys Rev E}, 105:054142, 2022.

\bibitem[KLDP18]{krajenbrink18systematic}
A.~Krajenbrink, P.~Le~Doussal, and S.~Prolhac.
\newblock Systematic time expansion for the {K}ardar{--P}arisi{--Z}hang
  equation, linear statistics of the {GUE} at the edge and trapped fermions.
\newblock {\em Nucl Phys B}, 936:239--305, 2018.

\bibitem[KMS16]{kamenev16}
A.~Kamenev, B.~Meerson, and P.~V. Sasorov.
\newblock Short-time height distribution in the one-dimensional
  {K}ardar{--P}arisi{--Z}hang equation: Starting from a parabola.
\newblock {\em Phys Rev E}, 94(3):032108, 2016.

\bibitem[Kra19]{krajenbrink19thesis}
A.~Krajenbrink.
\newblock {\em Beyond the typical fluctuations: a journey to the large
  deviations in the {K}ardar{-P}arisi{-Z}hang growth model}.
\newblock PhD thesis, PSL Research University, 2019.

\bibitem[Kra20]{krajenbrink20det}
A.~Krajenbrink.
\newblock From {P}ainlev{\'e} to {Z}akharov{--S}habat and beyond: Fredholm
  determinants and integro-differential hierarchies.
\newblock {\em J Phys Math Theor}, 54(3):035001, 2020.

\bibitem[KS12]{karatzas12}
I.~Karatzas and S.~Shreve.
\newblock {\em Brownian motion and stochastic calculus}, volume 113.
\newblock Springer Science \& Business Media, 2012.

\bibitem[LD20]{ledoussal19kp}
P.~Le~Doussal.
\newblock Large deviations for the {K}ardar{--P}arisi{--Z}hang equation from
  the {K}adomtsev{--P}etviashvili equation.
\newblock {\em J Stat Mech Theory Exp}, 2020.
\newblock 043201.

\bibitem[LDMRS16]{ledoussal16short}
P.~Le~Doussal, S.~N. Majumdar, A.~Rosso, and G.~Schehr.
\newblock Exact short-time height distribution in the one-dimensional
  {K}ardar{--P}arisi{--Z}hang equation and edge fermions at high temperature.
\newblock {\em Phys Rev Lett}, 117(7):070403, 2016.

\bibitem[LDMS16]{ledoussal16long}
P.~Le~Doussal, S.~N. Majumdar, and G.~Schehr.
\newblock Large deviations for the height in 1{D} {K}ardar-{P}arisi-{Z}hang
  growth at late times.
\newblock {\em EPL (Europhysics Letters)}, 113(6):60004, 2016.

\bibitem[Lin21]{lin21half}
Y.~Lin.
\newblock {L}yapunov exponents of the half-line {SHE}.
\newblock {\em J Stat Phys}, 183(3):1--34, 2021.

\bibitem[Lio07]{lions07}
P.~Lions.
\newblock College de france course on mean-field games.
\newblock {\em College de France}, 2011, 2007.

\bibitem[LT21]{lin21}
Y.~Lin and L.-C. Tsai.
\newblock Short time large deviations of the {KPZ} equation.
\newblock {\em Commun Math Phys}, 386(1):359--393, 2021.

\bibitem[LW22]{liu22}
Z.~Liu and Y.~Wang.
\newblock A conditional scaling limit of the {KPZ} fixed point with height
  tending to infinity at one location.
\newblock {\em arXiv:2208.12215}, 2022.

\bibitem[MKV16]{meerson16}
B.~Meerson, E.~Katzav, and A.~Vilenkin.
\newblock Large deviations of surface height in the {K}ardar{--P}arisi{--Z}hang
  equation.
\newblock {\em Phys Rev Lett}, 116(7):070601, 2016.

\bibitem[MMS22]{mallick22}
K.~Mallick, H.~Moriya, and T.~Sasamoto.
\newblock Exact solution of the macroscopic fluctuation theory for the
  symmetric exclusion process.
\newblock {\em arXiv:2202.05213}, 2022.

\bibitem[MS17]{meerson17}
B.~Meerson and J.~Schmidt.
\newblock Height distribution tails in the {K}ardar{--P}arisi{--Z}hang equation
  with brownian initial conditions.
\newblock {\em J Stat Mech Theory Exp}, 2017(10):103207, 2017.

\bibitem[MV18]{meerson18}
B.~Meerson and A.~Vilenkin.
\newblock Large fluctuations of a {K}ardar{-P}arisi{-Z}hang interface on a half
  line.
\newblock {\em Physical Review E}, 98(3):032145, 2018.

\bibitem[SM18]{smith18exact}
N.~R. Smith and B.~Meerson.
\newblock Exact short-time height distribution for the flat
  {K}ardar{--P}arisi{--Z}hang interface.
\newblock {\em Phys Rev E}, 97(5):052110, 2018.

\bibitem[SMP17]{sasorov17}
P.~Sasorov, B.~Meerson, and S.~Prolhac.
\newblock Large deviations of surface height in the 1+1-dimensional
  {K}ardar{--P}arisi{--Z}hang equation: exact long-time results for ${\lambda}
  h< 0$.
\newblock {\em J Stat Mech Theory Exp}, 2017(6):063203, 2017.

\bibitem[SMS18]{smith18finitesize}
N.~R. Smith, B.~Meerson, and P.~Sasorov.
\newblock Finite-size effects in the short-time height distribution of the
  {K}ardar{--P}arisi{--Z}hang equation.
\newblock {\em J Stat Mech Theory Exp}, 2018(2):023202, 2018.

\bibitem[SMV19]{smith19}
N.~R. Smith, B.~Meerson, and A.~Vilenkin.
\newblock Time-averaged height distribution of the {K}ardar{--P}arisi{--Z}hang
  interface.
\newblock {\em J Stat Mech Theory Exp}, 2019(5):053207, 2019.

\bibitem[Tsa22a]{tsai22}
L.-C. Tsai.
\newblock Exact lower-tail large deviations of the kpz equation.
\newblock {\em Duke Math J}, 1(1):1--44, 2022.

\bibitem[Tsa22b]{tsai22wnt}
L.-C. Tsai.
\newblock Integrability in the weak noise theory.
\newblock {\em arXiv:2204.00614}, 2022.

\end{thebibliography}
\end{document}